\documentclass[12pt,oneside,english]{amsart}
\usepackage[T1]{fontenc}
\usepackage[latin9]{inputenc}
\usepackage{fullpage}
\usepackage{amsthm}
\usepackage{amstext}
\usepackage{amssymb}

\usepackage{hyperref}
\hypersetup{colorlinks,%
citecolor=green,%
linkcolor=cyan,%
pdftex}
\usepackage{color}

\newtheorem{theorem}{Theorem}[section]
\newtheorem{proposition}[theorem]{Proposition}
\newtheorem{lemma}[theorem]{Lemma}

\newtheorem{conjecture}[theorem]{Conjecture} 

\theoremstyle{remark}
\newtheorem{definition}[theorem]{Definition}

\newtheorem*{remark}{Remark}

\makeatletter
\numberwithin{equation}{section}
\numberwithin{figure}{section}

\newcommand{\Area}{\mathop{\rm Area}}
\newcommand{\diam}{\mathop{\rm diam}}
\newcommand{\A}{{\mathcal A}}
\newcommand{\B}{{\mathcal B}}
\newcommand{\C}{{\mathcal C}}
\renewcommand{\P}{{\mathbb P}}
\newcommand{\R}{{\mathbb R}}
\newcommand{\T}{{\mathcal T}}

\renewcommand{\mod}[1]{{\ifmmode\text{\rm\ (mod~$#1$)}\else\discretionary{}{}{\hbox{ }}\rm(mod~$#1$)\fi}}
\newcommand{\leg}[2]{\genfrac{(}{)}{}{}{#1}{#2}}

\makeatother

\usepackage{babel}

\title{Primitive points in rational polygons}

\author{Imre B\'ar\'any}
\address{R\'enyi Institute of Mathematics, Hungarian Academy of Sciences, \newline
H-1364 Budapest, Pf. 127, Hungary and \newline
Department of Mathematics, University College London, \newline
Gower Street, London, WC1E 6BT, England} 
\email{barany@renyi.hu}

\author{Greg Martin}
\address{Department of Mathematics, University of British Columbia \newline
Room 121, 1984 Mathematics Road, Vancouver, BC, Canada  V6T 1Z2} 
\email{gerg@math.ubc.ca}

\author{Eric Naslund}
\address{Department of Mathematics, Princeton University \newline
  Fine Hall, Washington Road, Princeton, NJ 08544} 
\email{naslund@princeton.edu}

\author{Sinai Robins}
\address{Sinai Robins, Instituto de Mathematica e Estatistica, Universidade de S\~ao Paulo,  \newline
 05508-090 S\~ao Paulo, Brazil}
\email{srobins@ime.usp.br}

\keywords{Primitive points in polygons, visible points, Euler's Totient function, Error term, rational polygons}
\subjclass[2010]{Primary 11H06, 11P21; secondary 52C05}

\begin{document}

\begin{abstract} Let $\A$ be a star-shaped polygon in the plane, with rational vertices, containing the origin. The number of primitive lattice points in the dilate $t\A$ is asymptotically $\frac6{\pi^2} \Area(t\A)$ as $t\to \infty$. We show that the error term is both $\Omega_\pm\big( t\sqrt{\log\log t} \big)$ and $O(t(\log t)^{2/3}(\log\log t)^{4/3})$. Both bounds extend (to the above class of polygons) known results for the isosceles right triangle, which appear in the literature as bounds for the error term in the summatory function for Euler's $\phi(n)$.
\end{abstract}

\maketitle

\section{Introduction}

One of the fundamental problems in discrete geometry is to estimate the number of lattice points contained in a polygon. In this paper we concern ourselves with the set of {\em primitive lattice points}
\begin{equation}
\P = \{ (m,n) \in \mathbb Z^2 \colon \gcd(m,n) = 1 \},
\end{equation}
also known as lattice points visible from the origin. It is a classical result the number of primitive lattice points in a ``reasonable'' region in $\R^2$ is approximately $\frac6{\pi^2}$ times the area of the region. We shall be interested in the family $\{t\A\}$ of dilates of a fixed polygon $\A$, for which we define the error term
\begin{equation} \label{eq: E definition}
E_{\A}(t) = \#(t\A\cap\P) - \frac6{\pi^2} \Area(t\A) = \#(t\A\cap\P) - \frac6{\pi^2} t^2 \Area(\A).
\end{equation}
The fact that $\#(t\A\cap\P) \sim \frac6{\pi^2} t^2 \Area(\A)$, or equivalently that $E_{\A}(t) = o(t^2)$, was likely used as far back as Minkowski (see \cite[page 998]{Rogers} or \cite[Theorem 459]{HW}). Stronger upper bounds for $E_{\A}(t)$ are known; we state the following result, which will be justified in the next section, as a benchmark for comparison. 

\begin{proposition} \label{prop: t log t upper bound corollary}
If $\A\subset\R^2$ is a polygon, then
\begin{equation*}
E_{\A}(t) \ll t\log t
\end{equation*}
for $t\ge2$, where the implicit constant may depend on~$\A$.
\end{proposition}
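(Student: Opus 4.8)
The plan is to run the classical M\"obius inversion argument (the one behind Mertens' estimate $\sum_{n\le x}\phi(n)=\frac3{\pi^2}x^2+O(x\log x)$), now with the dilate $t\A$ in place of a scaled triangle. Put $L(K)=\#(K\cap\mathbb Z^2)$. For $(m,n)\in\mathbb Z^2\setminus\{0\}$ one has $\sum_{d\mid\gcd(m,n)}\mu(d)=1$ if $(m,n)\in\P$ and $0$ otherwise; summing this over $(m,n)\in(t\A\cap\mathbb Z^2)\setminus\{0\}$, interchanging the order of summation and writing each such point as $d$ times a nonzero integer vector, gives
\begin{equation*}
\#(t\A\cap\P)=\sum_{d\ge1}\mu(d)\bigl(L(\tfrac td\A)-\varepsilon_0\bigr),
\end{equation*}
where $\varepsilon_0=1$ if $0\in\A$ and $\varepsilon_0=0$ otherwise, the term $-\varepsilon_0$ removing the non-primitive point $(0,0)$ from each count. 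Since $\A\subseteq B(0,R)$ with $R=\sup_{x\in\A}|x|$, the bracket vanishes once $d>Rt$, so this is really a finite sum over $d\le Rt$; the bounded range $2\le t<1/R$, if nonempty, is disposed of at once (there $t\A$ meets no nonzero lattice point, so $E_\A(t)=-\frac6{\pi^2}t^2\Area(\A)$ is bounded).

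The only external input needed is a lattice-point count for a dilated polygon: for $s>0$,
\begin{equation*}
L(s\A)=s^2\Area(\A)+O(s+1),
\end{equation*}
with implied constant depending on $\A$. I would get this by triangulating $\A$ into finitely many triangles $T_i$, applying to each $sT_i$ the classical convex estimate $\#(K\cap\mathbb Z^2)=\Area(K)+O(\diam K+1)$, and observing that the resulting overcounts lie on the finitely many shared edges, each a segment of length $O(s)$ carrying $O(s+1)$ lattice points; summing, $\sum_i s^2\Area(T_i)=s^2\Area(\A)$ and every error is $O(s+1)$.

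Substituting this estimate into the truncated M\"obius sum and pulling out the main term,
\begin{equation*}
\#(t\A\cap\P)=t^2\Area(\A)\sum_{d\le Rt}\frac{\mu(d)}{d^2}-\varepsilon_0\sum_{d\le Rt}\mu(d)+O\Bigl(\sum_{d\le Rt}\bigl(\tfrac td+1\bigr)\Bigr).
\end{equation*}
Now $\sum_{d\le Rt}\mu(d)d^{-2}=\zeta(2)^{-1}+O(t^{-1})=\frac6{\pi^2}+O(t^{-1})$, so the first term is $\frac6{\pi^2}t^2\Area(\A)+O(t)$; the Mertens-type sum $\sum_{d\le Rt}\mu(d)$ is trivially $O(t)$; and $\sum_{d\le Rt}(t/d+1)\ll t\log t+t\ll t\log t$ for $t\ge2$. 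Subtracting $\frac6{\pi^2}t^2\Area(\A)$ yields $E_\A(t)\ll t\log t$.

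I do not anticipate a genuine obstacle here: the argument is in essence Mertens'. The steps calling for care are the bookkeeping at the origin and at the truncation $d\asymp t$, and the lattice-point count for a possibly non-convex $\A$, which the triangulation settles. It is worth emphasising that $t\log t$ is exactly the strength this method delivers, since the error sum $\sum_{d\le Rt}t/d$ is genuinely of order $t\log t$; improving it to the $O\bigl(t(\log t)^{2/3}(\log\log t)^{4/3}\bigr)$ of the abstract requires a substantially more delicate treatment of that error sum and a finer understanding of the lattice-point count $L(\tfrac td\A)$.
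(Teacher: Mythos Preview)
Your argument is correct and complete. The M\"obius inversion, the truncation at $d\le Rt$ (with the case split on whether $0\in\A$), the lattice-point count $L(s\A)=s^2\Area(\A)+O(s+1)$ via triangulation, and the final estimates all go through as you describe.

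The paper takes a different route. Rather than performing the M\"obius inversion directly, it quotes an existing result (Kranakis--Pocchiola) that for a convex domain $\B$ containing the origin one has $\#(\B\cap\P)-\frac6{\pi^2}\Area(\B)\ll\max\{1,\omega\log\omega\}$ with $\omega=\diam\B$; it then decomposes an arbitrary polygon $\A$ as a signed sum of \emph{pointed} triangles (triangles with the origin as a vertex), applies the quoted bound to each, and handles the $O(t)$ lattice points on shared edges. Your approach is more self-contained and elementary, since it re-derives the primitive-point bound from scratch rather than citing it, and it does not need the decomposition into pointed triangles to ensure every piece contains the origin---your truncation at $d\le Rt$ works uniformly via the $\varepsilon_0$ bookkeeping. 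What the paper's approach buys is structural: the pointed-triangle decomposition is the same device used later (Proposition~2.5, Theorem~2.6) to reduce $E_\A(t)$ to a linear combination of the special error terms $E_\Delta(f_jt)$, so the proof of Proposition~1.1 serves as a warm-up for that machinery.
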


\noindent The purpose of this paper is to improve this upper bound for any {\em rational polygons} (a polygon all of whose vertices have both coordinates rational), and show that it cannot be improved too much more by providing a strong $\Omega_{\pm}$ result for the error term.

It is instructive to consider the specific example $\A=\Delta$, where $\Delta$ is the isosceles right triangle with vertices $(0,0)$, $(1,0)$, and $(1,1)$ and thus area $\frac12$. Then $\#(t\Delta\cap\P)$ is the number of primitive points in the dilate $t\Delta = \{ (x,y)\in \R^2\colon 0\le y\le x\le t\}$, that is,
\begin{equation}\label{eq: totient sum introduced}
\#(t\Delta\cap\P) = \sum_{0\le m\le t} \sum_{\substack{0\le n\le m \\ \gcd(m,n)=1}} 1 = 1 + \sum_{1\le m\le t} \phi(m)
\end{equation}
(where the extra $1$ comes from the fact that $\gcd(0,1)=1$). It is well known that this summatory function of the Euler $\phi$-function is asymptotic to $\frac3{\pi^2}t^2$, so we define
\begin{equation} \label{eq: E delta def}
E_\Delta(t) = \sum_{1\le m\le t} \phi(m) - \frac3{\pi^2} t^2 + 1 = \#(t\Delta\cap\P) - \frac3{\pi^2} t^2.
\end{equation}
Proposition~\ref{prop: t log t upper bound corollary} implies the estimate $E_\Delta(t) \ll t\log t$, which (when phrased in terms of the summatory function of the Euler $\phi$-function) is a classical result of Mertens \cite{Mertens}. The best unconditional upper bound for this error term $E_\Delta(t)$ is due to Walfisz \cite[page 144, eq.\ (3)]{Walfisz}. Using methods related to exponential sums, he showed that
\begin{equation}  \label{eqn: Walfisz}
E_\Delta(t) \ll t(\log t)^{2/3}(\log\log t)^{4/3}.
\end{equation}
As a consequence of our work, we can extend this bound from the isosceles right triangle $\Delta$ to all rational polygons.

\begin{theorem}\label{thm: polygon upper bound}
Let $\A\subset\R^2$ be a rational polygon. Then 
\[
E_{\A}(t)\ll t(\log t)^{2/3}(\log\log t)^{4/3},
\]
where the implicit constant may depend on~$\A$.
\end{theorem}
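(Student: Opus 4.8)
The plan is to dissect $\A$ into elementary pieces on which the primitive‑point count can be written, after M\"obius inversion over the $\gcd$, entirely in terms of the single function $E_\Delta$, and then to invoke~\eqref{eqn: Walfisz}. First I would clear denominators: if $N$ is a common denominator of the vertices of $\A$, then $N\A$ is a lattice polygon and $E_\A(t)=E_{N\A}(t/N)$, so it suffices to bound $E_B(u)$ for lattice polygons $B$ and real $u\ge2$. Triangulating $B$ using only its own vertices, and observing that each shared edge meets $uB$ in only $O(u)$ lattice points, gives $E_B(u)=\sum_i E_{T_i}(u)+O(u)$ with the $T_i$ lattice triangles. Thus the theorem reduces to the bound $E_T(u)\ll u(\log u)^{2/3}(\log\log u)^{4/3}$ for an arbitrary lattice triangle $T$.

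The engine I would use is an exact identity for $E_\Delta$. From $\phi=\mu*\mathrm{id}$ one has $\sum_{n\le v}\phi(n)=\sum_{d\le v}\mu(d)\cdot\tfrac12\lfloor v/d\rfloor(\lfloor v/d\rfloor+1)$; writing $\lfloor v/d\rfloor=v/d-\{v/d\}$ and using $\sum_{d\le v}\mu(d)/d^2=6/\pi^2+O(1/v)$ together with $\sum_{d\le v}\mu(d)/d=O(1)$ to absorb everything but one term, this becomes
\[
\sum_{d\le v}\frac{\mu(d)}{d}\Bigl\{\frac vd\Bigr\}=-\frac{E_\Delta(v)}{v}+O(1)\qquad(v\ge2).
\]
By~\eqref{eqn: Walfisz} the left‑hand side is $\ll(\log v)^{2/3}(\log\log v)^{4/3}$. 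Moreover, for a fixed positive integer $\kappa$ and any $M$ with $M\asymp u$ the sums $\sum_{d\le M}\frac{\mu(d)}d\{\kappa u/d\}$ and $\sum_{d\le\kappa u}\frac{\mu(d)}d\{\kappa u/d\}$ differ by only $O(1)$, since the two summation ranges differ by a bounded factor; hence $\sum_{d\le M}\frac{\mu(d)}d\{\kappa u/d\}\ll(\log u)^{2/3}(\log\log u)^{4/3}$ for every fixed positive integer $\kappa$.

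Next I would estimate $E_T(u)$ by counting primitive points column by column. Slice $uT$ by the vertical lines through its (scaled) vertices into at most two pieces, each bounded left and right by vertical segments (possibly a point) and above and below by edges of $uT$. For an integer column $x=m$ inside such a piece the admissible values of $n$ fill an interval $[\ell(m),r(m)]$ cut out by the two bounding edges, so the count of $n$ coprime to $m$ equals $\sum_{d\mid m}\mu(d)\bigl(\lfloor r(m)/d\rfloor-\lceil\ell(m)/d\rceil+1\bigr)$. An edge of $uT$ through the scaled lattice points $(x_1u,y_1u),(x_2u,y_2u)$ is $y=\lambda(x-cu)$ with $\lambda=(y_2-y_1)/(x_2-x_1)$ and $c=x_1-y_1/\lambda$, so with $m=de$ its contribution to a floor is $\lfloor\lambda e-(\lambda c)u/d\rfloor$; the key point is that if $\lambda=p/q$ in lowest terms then $q\lambda c=px_1-qy_1$ is an \emph{integer}. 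Summing over $m$ and averaging the resulting sawtooth values $\{\lambda e-\delta\}$ over complete periods of length $q$ (one period's sum being $-\{q\delta\}+\mathrm{const}$, hence $-\{(q\lambda c)u/d\}+\mathrm{const}$), one finds that $\#(uT\cap\P)$ splits as: a leading term that rebuilds $\frac6{\pi^2}u^2\Area(T)$ via $\sum_{d\le u}\mu(d)/d^2=6/\pi^2+O(1/u)$; linear‑in‑$u$ terms that are $O(u)$ via $\sum_{d\le u}\mu(d)/d=O(1)$; trivially $O(u)$ contributions from the squared fractional parts; and finitely many oscillating terms, each of the exact form $(\text{const})\,u\sum_{d}\tfrac{\mu(d)}d\{\kappa u/d\}$ with $\kappa$ a non‑negative integer. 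By the engine each oscillating term is $\ll u(\log u)^{2/3}(\log\log u)^{4/3}$ (and is $0$ when $\kappa=0$), so $E_T(u)\ll u(\log u)^{2/3}(\log\log u)^{4/3}$, which proves the theorem.

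The main obstacle I anticipate is the bookkeeping in this last step: one must check that \emph{every} term of size $\asymp u$ produced by the $\gcd$‑sieve is a constant multiple of $u\sum_{d}\tfrac{\mu(d)}d\{\kappa u/d\}$ with $\kappa\in\mathbb Z_{>0}$, or is genuinely $O(u)$. In particular the $O(u/d)$‑sized errors coming from rounding the column‑range endpoints $\xi_iu/d$ to integers must be re‑expanded, via $\lceil x\rceil=x+\{-x\}$ and $\{-\xi_iu/d\}=1-\{\xi_iu/d\}+O(\mathbf 1_{\xi_iu/d\in\mathbb Z})$, to reveal their own $\{(\text{integer})\,u/d\}$ content — bounding them by $O(u/d)$ term by term would only give the useless $O(u\log u)$. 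The arithmetic fact $q\lambda c\in\mathbb Z$ is exactly what ensures that all the sawtooth arguments arising have integer coefficients, but carrying the dissection and the asymptotic expansion out uniformly for trapezoidal pieces in arbitrary position requires some care.
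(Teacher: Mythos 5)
Your proposal is correct in outline but takes a genuinely different route from the paper. The paper's proof of this theorem is a one-line corollary of Theorem~\ref{thm: error term decomposition}, which states the decomposition $E_{\A}(t)=\sum_j r_j E_\Delta(f_j t)+O(t)$ for rational $r_j,f_j$; that theorem is in turn proved by (i) switching to the solid-angle weighted count $\P_\A(t)$ (which is additive under gluing, so triangulation produces no boundary bookkeeping), (ii) M\"obius-inverting \emph{over dilation scales}, $\P_\A(t)=\sum_d\mu(d)\big(A_\A(t/d)-\omega_{t\A/d}(0,0)\big)$, and (iii) importing the Le~Quang--Robins formula~\eqref{LQR} for the solid angle sum $A_\T$ of a rational pointed triangle. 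You instead clear denominators, triangulate into lattice triangles, and M\"obius-invert \emph{columnwise over divisors of $m$}, then sum the resulting sawtooth values over complete periods via Raabe's identity, using the integrality of $q\lambda c=px_1-qy_1$ to recognize each surviving oscillating term as a constant times $u\sum_d\frac{\mu(d)}d\{\kappa u/d\}$ and invoking~\eqref{eqn: Walfisz} through the engine $\sum_{d\le v}\frac{\mu(d)}d\{v/d\}=-E_\Delta(v)/v+O(1)$ (which is Proposition~\ref{prop: formula for error term} in disguise). Your route has the virtue of being self-contained --- it does not lean on the Le~Quang--Robins preprint and needs no pointed triangles or solid-angle formalism --- while the paper's route is cleaner (additivity of solid angles replaces your boundary-double-counting bookkeeping) and, crucially, yields the decomposition Theorem~\ref{thm: error term decomposition} as a free-standing structural statement that is then reused to prove the $\Omega_\pm$ lower bound; a direct column-count as you describe proves the upper bound but does not obviously package into that decomposition with sign control on the coefficients. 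You are candid that the bookkeeping in your penultimate step is not fully carried out; that is an honest assessment --- the ceiling/floor corrections, the $O(q)$ incomplete-period boundary terms, the secondary fractional part in the period count $\lfloor(e_2-e_1)/q\rfloor$, and the distinction between $M<\kappa u$ and $M>\kappa u$ in your last displayed estimate all need to be tracked, and the remark ``since the two summation ranges differ by a bounded factor'' is slightly glib (for $d>\kappa u$ you also need $\{\kappa u/d\}=\kappa u/d$ to get convergence), though the conclusions are correct. None of these is a fatal gap, so the proposal stands as a valid, more elementary but more laborious alternative.
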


Prior to Walfisz's result, Chowla and Pillai \cite{CP} showed that these upper bounds cannot be improved too much by establishing the lower bound
\[
\limsup_{t\to\infty} \frac{|E_\Delta(t)|}{t\log\log\log t} > 0,
\]
that is, $E_\Delta(t) = \Omega(t\log\log\log t)$. Later, Erd\H os and Shapiro \cite{ES} obtained a slightly weaker quantitative lower bound but showed that $E_\Delta(t)$ oscillates in sign infinitely often. Both results were improved by Montgomery \cite{Montgomery87}:

\begin{proposition} \label{prop: Montgomery}
$E_\Delta(t)=\Omega_{\pm}(t\sqrt{\log\log t})$; in other words,
\[
\limsup_{t\to\infty} \frac{E_\Delta(t)}{t\sqrt{\log\log t}} > 0 \quad\text{and}\quad \liminf_{t\to\infty} \frac{E_\Delta(t)}{t\sqrt{\log\log t}} < 0.
\]
\end{proposition}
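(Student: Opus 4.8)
The statement is due to Montgomery~\cite{Montgomery87}; here is the structure of a proof, following him.

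\textbf{Step 1: reduction to a M\"obius-twisted sawtooth sum.} Using $\phi=\mu*\mathrm{id}$ together with $\sum_{e\le x}e=\tfrac12x^2-x\psi(x)+O(1)$, where $\psi(u)=\{u\}-\tfrac12$, one obtains
\[
\sum_{m\le t}\phi(m)=\sum_{d\le t}\mu(d)\Bigl(\frac{t^2}{2d^2}-\frac td\,\psi(t/d)+O(1)\Bigr)=\frac{t^2}{2\zeta(2)}-t\,S(t)+O(t),\qquad S(t):=\sum_{d\le t}\frac{\mu(d)}{d}\,\psi(t/d),
\]
because $\sum_{d\le t}\mu(d)/d^2=\zeta(2)^{-1}+O(1/t)$ and the at most $t$ remaining terms are each $O(1)$. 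Hence $E_\Delta(t)=-t\,S(t)+O(t)$ (and extending the truncation $d\le t$ to the conditionally convergent series $\sum_{d\ge1}$ changes $S(t)$ by only $O(1)$), so it suffices to prove $S(t)=\Omega_\pm\!\bigl(\sqrt{\log\log t}\bigr)$.

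\textbf{Step 2: analysis of $S(t)$.} Here I would follow Montgomery. Insert the truncated Fourier expansion $\psi(u)=-\sum_{1\le|h|\le H}(2\pi ih)^{-1}e^{2\pi ihu}+O\!\bigl(\min(1,1/(H\|u\|))\bigr)$, where $\|u\|$ is the distance to the nearest integer; the contribution of the error term — summed over $d$ and averaged over $t$ in a dyadic block — is controlled by an elementary mean-value estimate, and the remaining main term is governed by the twisted exponential sums $\sum_d(\mu(d)/d)e^{2\pi iht/d}$. One must then exhibit, along a sequence $t_j\to\infty$, enough correlation among these sums (with one sign for the $\limsup$, the opposite for the $\liminf$) to force $|S(t_j)|\gg\sqrt{\log\log t_j}$, while bounding the complementary ``off-resonance'' part by large-sieve / bilinear estimates for sums twisted by $\mu$. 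Montgomery's construction is two-sided by design — running it with the target conspiracy reversed yields large negative excursions of $S$ just as the original yields large positive ones — which is precisely what upgrades $\Omega$ to $\Omega_\pm$.

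\textbf{Main obstacle.} The heart of the matter, and the reason the exponent on $\log\log t$ is $\tfrac12$ rather than $1$, is the cancellation built into $\sum_d\mu(d)/d$ — equivalently $\prod_{p\le y}(1-1/p)\to0$. On average $S(t)$ is merely of bounded size: a short computation using $\tfrac1T\int_0^T\psi(t/d)\psi(t/d')\,dt\to\gcd(d,d')^2/(12dd')$ gives $\tfrac1T\int_0^TS(t)^2\,dt\to\tfrac1{12\zeta(2)}=\tfrac1{2\pi^2}$. Consequently the large values of $S(t)$ are an \emph{extremal}, not a typical, phenomenon, and they cannot be produced by prescribing $t$ modulo a small modulus: forcing the ``smooth'' terms $\psi(t/d)$ (those $d$ composed only of small primes) to be simultaneously large is self-defeating, since the composite $d$ then cancel the contribution of the prime $d$. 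What one must instead establish is that among the integers $t\le T$ the most favorable already satisfies $|S(t)|\gg\sqrt{\log\log T}$ — i.e.\ that the multiplicative cancellation cannot suppress every such value below that threshold — and making this lower bound compatible with the error from truncating the Fourier series is the genuinely delicate part of Montgomery's proof.
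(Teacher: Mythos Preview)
Your Step~1 is correct and matches the paper (Proposition~2.3 there): $E_\Delta(t)=-t\sum_{d\le t}\frac{\mu(d)}{d}B(t/d)+O(t)$, so it suffices to show that $R_0(t)$ (equivalently your $S(t)$) is $\Omega_\pm(\sqrt{\log\log t})$.

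Step~2, however, is not Montgomery's argument, and as written it is not a proof. Montgomery does \emph{not} Fourier-expand the sawtooth and study the exponential sums $\sum_d(\mu(d)/d)e^{2\pi iht/d}$, nor does he use large-sieve or bilinear estimates. His mechanism is much more direct and entirely constructive. He averages $R_0$ over an arithmetic progression: for $0<\alpha<q$ non-integral,
\[
\sum_{n\le N}R_0(nq+\alpha)=C(q,\alpha)\,N+O\bigl(N\exp(-c\sqrt{\log N})\bigr),\qquad C(q,\alpha)=\frac{6}{\pi^2}\prod_{p\mid q}\Bigl(1-\frac1{p^2}\Bigr)^{-1}K(q,\alpha),
\]
where $K(q,\alpha)=-\sum_{d\mid q}\frac{\mu(d)}{d}B(\alpha/d)$; this comes from Raabe's identity $\sum_{j=1}^J B(x+j/J)=B(Jx)$, not from Fourier expansion. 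He then \emph{chooses} $q$ to be the product of all primes $p\le y$ with $p\equiv3\pmod4$, and $\alpha=q/4$. For every divisor $d\mid q$ one has $B(\alpha/d)=\pm\tfrac14$, with the sign determined by $d\bmod4$, which for this $q$ is exactly $\mu(d)$. Hence every term in $K(q,\alpha)$ has the same sign, and
\[
|K(q,\alpha)|=\tfrac14\sum_{d\mid q}\tfrac1d=\tfrac14\prod_{\substack{p\le y\\ p\equiv3\ (4)}}\Bigl(1+\tfrac1p\Bigr)\asymp\sqrt{\log y}\asymp\sqrt{\log\log q}.
\]
Taking $q\le e^{c\sqrt{\log N}}$, the average of $R_0$ over $n\le N$ is $\gg\sqrt{\log\log N}$, so by pigeonhole some $t=nq+\alpha\le 2Nq$ has $R_0(t)\gg\sqrt{\log\log t}$. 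Choosing $\alpha=3q/4$ reverses the sign, giving the $\Omega_-$ half.

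This also means your ``Main obstacle'' paragraph misdiagnoses the situation. You write that forcing the smooth terms $B(t/d)$ to be simultaneously large is ``self-defeating, since the composite $d$ then cancel the contribution of the prime $d$''; in fact Montgomery's choice of $q$ and $\alpha$ is precisely engineered so that the composite and prime $d$ \emph{reinforce} each other perfectly. The reason the exponent is $\tfrac12$ rather than $1$ is simply that only half the primes (those $\equiv3\pmod4$) can be used in $q$, so one gets $\prod(1+1/p)$ over half the primes up to $y$, which is $\asymp\sqrt{\log y}$ instead of $\log y$. The mean-square computation you give is correct and interesting context, but it plays no role in the actual proof.
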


We use the term {\em origin-star-shaped} (star-shaped with respect to the origin) to refer to any domain $\B$ for which $\lambda \B \subset \B$ for all $\lambda \in [0,1]$; note that in particular, any origin-star-shaped domain contains the origin. The main result of this paper generalizes this lower bound of Montgomery to this class of polygons:

\begin{theorem}\label{thm: polygon lower bound}
Let $\A\subset\R^2$ be a rational origin-star-shaped polygon. Then 
\[
E_{\A}(t)=\Omega_{\pm}(t\sqrt{\log\log t}),
\]
where the implicit constant may depend on~$\A$.
\end{theorem}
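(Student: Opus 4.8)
The plan is to isolate a Montgomery-type divisor sum inside $E_\A(t)$ and then re-run the argument behind Proposition~\ref{prop: Montgomery}. First I would sieve out the gcd. Writing $L_\A(s)=\#(s\A\cap\mathbb Z^2)$ and using that the indicator of $\gcd(m,n)=1$ equals $\sum_{e\mid\gcd(m,n)}\mu(e)$, applied to the nonzero lattice points of $t\A$, one gets $\#(t\A\cap\P)=\sum_{e\ge1}\mu(e)\bigl(L_\A(t/e)-1\bigr)$; this is a finite sum because $\A$ is origin-star-shaped, so $L_\A(s)=1$ for all sufficiently small $s>0$. Since $L_\A(s)=\Area(\A)s^2+\rho_\A(s)$ with $\rho_\A(s)\ll s$ for a polygon (the classical boundary estimate), and $\sum_{e\ge1}\mu(e)/e^2=6/\pi^2$, the tails contribute only $O(t)$ and one is left with
\[
E_\A(t)=\sum_{e\le Ct}\mu(e)\,\rho_\A(t/e)+O(t),\qquad C=C(\A).
\]

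Next I would make $\rho_\A$ explicit. Summing $L_\A(s)$ over horizontal lattice lines and using an elementary identity for $\sum_n\{\alpha n+\beta\}$ — carried out one edge of $\A$ at a time, each edge of $s\A$ running between two rational points on a line of rational slope — shows that for a \emph{rational} polygon one has $\rho_\A(s)=A_\A(s)\,s+B_\A(s)$ for $s\ge1$, where $A_\A$ and $B_\A$ are bounded functions, each a fixed finite $\mathbb Q$-linear combination of sawtooth functions $s\mapsto\psi(\gamma s+\delta)$ (with $\gamma,\delta\in\mathbb Q$) plus a constant, the $\psi$-terms coming from the edges of $\A$. Substituting this, the $B_\A$-part contributes $O(t)$ and
\[
E_\A(t)=t\sum_{e\le Ct}\frac{\mu(e)}{e}\,A_\A(t/e)+O(t);
\]
for $\A=\Delta$ this recovers $A_\Delta(s)=\tfrac32-\{s\}$ and the classical sum driving $E_\Delta(t)$. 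One must also check that $A_\A$ is genuinely non-constant for \emph{every} rational polygon — equivalently $\limsup_{s\to\infty}\rho_\A(s)/s>\liminf_{s\to\infty}\rho_\A(s)/s$ — which can be read off from the jump behaviour of $L_\A(s)$; this step is unavoidable, since $A_\A$ constant would force $E_\A(t)=O(t)$ and contradict the theorem.

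It now suffices to prove that $\sum_{e\le Ct}\frac{\mu(e)}{e}A_\A(t/e)=\Omega_\pm\bigl(\sqrt{\log\log t}\bigr)$ for every non-constant $A_\A$ of the above form, and here I would follow Montgomery's proof of Proposition~\ref{prop: Montgomery} essentially verbatim. That argument constructs special values $t_j\to\infty$ — built from products of small primes so that the sawtooths $\psi(t_j/e)$ reinforce constructively over a long initial range of $e$ — and evaluates the resulting divisor sum through an Euler-product/character-sum computation whose leading amplitude must be shown nonzero. Writing $A_\A=c_0+\sum_k c_k\psi(\gamma_k s+\delta_k)$ with finitely many rationals $\gamma_k,\delta_k$, the only modification is that the resonating $t_j$ must be chosen simultaneously compatible with all the denominators appearing in the $\gamma_k$; since these are finite in number and rational, a single construction serves, the amplitude becomes a finite sum, and that sum is nonzero precisely when $A_\A$ is non-constant. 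Carrying out the construction once to make the sum large and positive and once to make it large and negative then gives $E_\A(t)=\Omega_\pm(t\sqrt{\log\log t})$.

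The main obstacle is this last step: verifying that Montgomery's resonance construction and, above all, his non-vanishing computation survive the passage from a single sawtooth to a finite rational combination of them — i.e., that the several edges of $\A$ cannot conspire, along the relevant sequences $t_j$, to cancel the contribution responsible for $A_\A$ being non-constant. A secondary point needing care is the elementary but not entirely transparent claim that $A_\A$ is non-constant for every rational polygon.
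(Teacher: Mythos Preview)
Your high-level architecture matches the paper's: sieve with M\"obius to reduce $E_\A(t)$ to $t$ times a divisor sum of sawtooths, then push a Montgomery-style resonance argument through. The gap is exactly where you flag it, and it is real: the claim that the amplitude in the resonance computation is ``nonzero precisely when $A_\A$ is non-constant'' is not justified, and the paper's own development shows why this is the heart of the problem rather than a routine check.

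Concretely, after the resonance construction the amplitude takes the shape $\sum_j a_j \prod_{p\mid b_j}(1+1/p)^{-1}$ (up to a positive factor), where the $a_j$ are the sawtooth coefficients and the $b_j$ encode the frequencies. If the $a_j$ have mixed signs, this can vanish even though $A_\A$ is non-constant; the paper explicitly leaves the mixed-sign case as a conjecture and remarks that the method could only be pushed through for $k\le 3$ terms otherwise. So ``non-constant'' is not the right hypothesis, and no amount of tweaking the resonant $t_j$ will obviously fix this.

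What the paper does instead uses the origin-star-shaped hypothesis in a much deeper way than you do (you use it only to terminate the M\"obius sum). Via a solid-angle count and a formula of Le~Quang--Robins for pointed triangles, an origin-star-shaped rational polygon yields a decomposition $E_\A(t)=\sum_j r_j E_\Delta(f_j t)+O(t)$ with all $r_j>0$ and all $f_j$ positive rationals; crucially there are no phase shifts $\delta_k$ as in your $\psi(\gamma_k s+\delta_k)$. Then Montgomery's construction is modified: rather than working modulo $4$, one works modulo a prime $P\equiv -1\pmod{8b_1\cdots b_k}$, chosen so that every prime dividing any $b_j$ is a quadratic residue mod~$P$. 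The modulus $Q$ is a product of primes that are nonresidues mod~$P$, so $\mu(d)=\bigl(\tfrac{d}{P}\bigr)$ on divisors of $Q$, and the special arithmetic of $P$ forces all the Legendre-symbol factors $\bigl(\tfrac{d_2}{P}\bigr)$ coming from the different $b_j$ to equal $+1$. The amplitude is then a sum of strictly positive terms, hence nonzero. Your proposal contains neither of these two ideas --- the positivity coming from star-shapedness, and the choice of an auxiliary prime $P$ adapted to the $b_j$ --- and without them the non-vanishing step does not go through.
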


In the process of applying an adaptation of Montgomery's argument to the general error term $E_{\A}(t)$ for the primitive point counting function for the lattice polygon $\A$, we found ourselves needing to establish the following ``error term independence'' result concerning the error term $E_\Delta$ for the summatory function of $\phi(n)$, which was defined in equation~\eqref{eq: E delta def}; this result may be of independent interest.

\begin{theorem} \label{thm: error term independence}
For any positive rational numbers $c_1,\dots,c_k$ and $f_1,\dots,f_k$,
\[
c_1 E_\Delta(f_1 x) + \cdots + c_k E_\Delta(f_k x) = \Omega_\pm\big( x\sqrt{\log\log x} \big),
\]
where the implied constant may depend upon the $c_j$ and $f_j$.
\end{theorem}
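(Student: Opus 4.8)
The plan is to adapt Montgomery's method for proving $E_\Delta(t) = \Omega_\pm(t\sqrt{\log\log t})$ to a linear combination of shifted copies. Montgomery's argument ultimately rests on an identity expressing $E_\Delta(t)$ (after a smoothing/averaging step) in terms of a sum over primes, which exhibits the characteristic $\sqrt{\log\log t}$ behaviour via a second-moment computation; the oscillation in sign comes from choosing the smoothing parameter cleverly, or from a contour/Fourier argument that isolates the contribution of primes $p$ in a dyadic range and shows it can be made simultaneously large and positive, or simultaneously large and negative.

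Let me recall the structure. Write $M(t) = \sum_{n\le t}\mu(n)/n$ or work directly with the convolution identity $\phi = \mu * \mathrm{id}$, so that $\sum_{m\le t}\phi(m) = \sum_{d\le t}\mu(d)\sum_{k\le t/d}k = \frac12\sum_{d\le t}\mu(d)\big((t/d)^2 + O(t/d)\big)$. Hence $E_\Delta(t) = -\frac12\sum_{d\le t}\mu(d)\{t/d\}(something)$ — more precisely $E_\Delta(t) = \sum_{d\le t}\mu(d)\big(\frac{1}{2}\psi_0(t/d)\big) + \text{lower order}$ where $\psi_0$ involves the sawtooth function $\psi(x) = \{x\} - 1/2$. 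The key point is that $E_\Delta(t)$ is, up to controllable terms, a Dirichlet-type sum $-\sum_{d\le t}\mu(d)\big(\{t/d\}^2/2 - \{t/d\}/2 + \dots\big)$; the dominant oscillatory piece is $-\sum_d \mu(d)\psi(t/d)$ weighted by $t/d$, i.e., roughly $\sum_{d}\mu(d)(t/d)\psi(t/d)$ has the wrong size, so one instead isolates $\sum_{p\le t}\psi(t/p)$-type contributions after sieving out the squarefree structure. Montgomery's insight is that the terms from primes $p$ near a well-chosen scale add up coherently.

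First I would establish the precise elementary identity: for any fixed positive rational $f = a/b$,
\[
E_\Delta(fx) = \sum_{d\le fx}\mu(d)\Big(\tfrac12\psi(fx/d)^2 - \tfrac12\psi(fx/d) + \tfrac1{12}\Big)' \text{-type terms} + O(1),
\]
but cleaner is to use the known representation (essentially due to Montgomery) that, averaged against a suitable kernel, $E_\Delta(x)/x$ behaves like $-\sum_{p} \frac{\mu(p)}{p}\psi_\text{eff}(x/p) = \sum_p \frac1p \psi_\text{eff}(x/p)$ on the relevant scale. Then for the linear combination $G(x) := \sum_{j} c_j E_\Delta(f_j x)$, after the same smoothing I would get $G(x)/x \approx \sum_p \frac1p \sum_j c_j f_j\, \psi_\text{eff}(f_j x / p)$. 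The crucial structural fact I need is that this is still a genuine sum over primes with no catastrophic cancellation between the $k$ pieces: because the $f_j$ are fixed rationals, for each prime $p$ (outside the finitely many dividing numerators/denominators) the terms $\psi_\text{eff}(f_j x/p)$ are, as $x$ varies, "independent" sawtooth oscillations, and the $L^2$ norm of $\sum_j c_j f_j \psi_\text{eff}(f_j x/p)$ over a period in $x$ is bounded below by a positive constant (one checks $\int_0^1 (\sum_j c_j f_j \psi(f_j u))^2\,du > 0$ using the Fourier expansion $\psi(y) = -\sum_{m\ge1}\sin(2\pi m y)/(\pi m)$ and noting that distinct rationals $f_j$ cannot produce complete cancellation unless all $c_j = 0$). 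This lower bound on the per-prime second moment is exactly what feeds into Montgomery's variance argument: summing $1/p^2 \cdot (\text{const})$ over $p \le x$ gives $\gg \log\log x$, hence the typical size of $G(x)/x$ is $\gg \sqrt{\log\log x}$, and a suitable choice of test parameter (or a Dirichlet-series / resonance argument) promotes this to an $\Omega_\pm$ statement.

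The main obstacle I expect is twofold. (1) \emph{Verifying non-cancellation among the $k$ pieces}: one must show $\int_0^1 \big(\sum_{j=1}^k c_j f_j \psi(f_j u)\big)^2 \, du$ is strictly positive — equivalently that $\sum_j c_j f_j \psi(f_j u)$ is not identically zero. Since $\psi(f_j u)$ has Fourier coefficients supported on the arithmetic progression $\{m/f_j\}$ in frequency, and the $f_j$ are distinct positive rationals, the frequency $1/\max_j f_j$ (or more carefully, an appropriately chosen frequency) appears with a nonzero coefficient; this is a clean but slightly fiddly lemma about rationals and needs the positivity of the $c_j$ (or at least $c_j \neq 0$) and positivity of $f_j$. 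Actually with all $c_j > 0$ and $f_j > 0$, taking $u \to 0^+$, each $\psi(f_j u) \to -1/2$, so $\sum_j c_j f_j \psi(f_j u) \to -\frac12 \sum_j c_j f_j < 0$, which already shows the function is not identically zero — so positivity of the constants makes this step essentially free. (2) \emph{The sign-change (}$\Omega_\pm$\emph{) refinement}: getting both signs, not just $\Omega$, requires the more delicate part of Montgomery's argument — typically a result à la the Landau/oscillation theorems for the associated Dirichlet series $\sum E_\Delta(f_j n)/n^s$, or Montgomery's explicit construction of values of $x$ where the prime sum is biased positively and others where it is biased negatively. Here one must check that the linear combination does not accidentally destroy the symmetry that produces both signs; but since the dominant term $\sum_p \frac1p \sum_j c_j f_j \psi_\text{eff}(f_j x/p)$ is an odd-type (mean-zero sawtooth) oscillation, the same resonance/duality argument that handles a single $E_\Delta$ applies verbatim to the combination, with the quantitative input being the per-prime $L^2$ lower bound from step (1). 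I would therefore structure the proof as: (i) reduce to a smoothed prime sum via the $\mu * \mathrm{id}$ identity, citing Montgomery's reduction; (ii) prove the per-prime second-moment lower bound using the trivial $u\to0^+$ observation plus orthogonality; (iii) invoke Montgomery's variance-plus-oscillation machinery with this lower bound in place of the classical one to conclude $\Omega_\pm(x\sqrt{\log\log x})$.
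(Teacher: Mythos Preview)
Your proposal misidentifies the nature of Montgomery's argument and consequently misses the real obstacle.

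Montgomery's proof of $E_\Delta(t) = \Omega_\pm(t\sqrt{\log\log t})$ is not a variance or second-moment computation. It is an explicit construction: one shows (Lemma~\ref{Montgomery constant lemma}) that the average of $R_0(nq+\alpha)$ over $n \le N$ equals $C(q,\alpha)N + o(N)$, where $C(q,\alpha)$ is proportional to $K(q,\alpha) = -\sum_{d\mid q}\frac{\mu(d)}{d}B(\alpha/d)$. The task is then to \emph{choose} $q$ and $\alpha$ so that $|K(q,\alpha)|$ is large. Montgomery takes $q$ to be the product of all primes $p\equiv 3\pmod 4$ up to some height and $\alpha = q/4$; then $B(\alpha/d) = \pm\tfrac14$ with sign exactly $\mu(d)$, so $K(q,\alpha) = \tfrac14\prod_{p\mid q}(1+1/p) \asymp \sqrt{\log\log q}$. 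The $\sqrt{\log\log}$ comes from Mertens over half the primes, not from any $L^2$ average; your line ``summing $1/p^2\cdot(\text{const})$ over $p\le x$ gives $\gg \log\log x$'' is simply false, since $\sum_p 1/p^2$ converges.

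For the linear combination $\sum_j c_j E_\Delta(f_j x)$, the analogous averaging (Lemma~\ref{lem: montgomery's lemma for G in progressions}) produces a bias $\sum_j a_j C(qb_j,\alpha b_j)$ after clearing denominators. The genuine difficulty---which your proposal does not touch---is that the quantities $K(qb_j,\alpha b_j)$ may have \emph{different signs} for different $j$, so even with all $a_j>0$ the sum could cancel; Montgomery's mod-$4$ construction does not survive multiplication of $\alpha$ by arbitrary integers $b_j$. The paper's solution is to replace $4$ by a prime $P_{\bf b}\equiv -1\pmod{8b_1\cdots b_k}$, so that (by quadratic reciprocity) every prime dividing any $b_j$ is a quadratic residue mod $P_{\bf b}$. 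One then takes $q=Q_{N,\bf b}$ to be a product of primes that are quadratic \emph{non}residues mod $P_{\bf b}$ and $\alpha = m^\pm Q_{N,\bf b}/P_{\bf b}$. The Legendre symbol replaces the mod-$4$ sign, the class number $h(-P_{\bf b})$ replaces the constant $\tfrac14$, and crucially the factors $b_j$ are invisible to the Legendre symbol, forcing all $K(qb_j,\alpha b_j)$ to share the same sign (Lemmas~\ref{K scale lemma} and~\ref{K evaluation lemma}). Your per-prime $L^2$ non-vanishing observation (step (ii)) is correct but irrelevant to this mechanism, and the ``variance-plus-oscillation machinery'' you invoke in step (iii) does not exist in the form you describe.
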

\noindent In other words, there can be no ``magic cancellation'' among the terms $E_\Delta(f_j x)$ that makes the oscillation of the sum significantly smaller than that of an individual term. Furthermore we conjecture that oscillations of the same size exist even when the $c_i$ are allowed to be negative (where we require the $f_i$ to be distinct to avoid trivial cancellations). 
\begin{conjecture}
Theorem \ref{thm: error term independence} holds for any rational numbers $c_i$ and any distinct positive rational numbers $f_i$.
\end{conjecture}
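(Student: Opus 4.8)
\section{Proof strategy for the conjecture}

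The route is to combine the proof of Theorem~\ref{thm: error term independence} with Montgomery's construction underlying Proposition~\ref{prop: Montgomery}, supplying one extra ingredient that forbids cancellation between terms of opposite sign. First normalize: writing each $f_i=a_i/q$ over a common denominator $q$ with $a_1<\dots<a_k$ distinct positive integers, and replacing $x$ by $qx$, we may assume the $f_i$ are distinct positive integers $a_i$ and the $c_i$ are nonzero. Next pass to the oscillating part of $E_\Delta$: from $\phi=\mu*\mathrm{id}$, $\lfloor u\rfloor=u-\tfrac12-\psi(u)$ with $\psi(u)=\{u\}-\tfrac12$, expansion of $\lfloor z/d\rfloor^2$, and the estimates $\sum_{d\le z}\mu(d)/d^2=6/\pi^2+O(1/z)$ and $\sum_{d\le z}\mu(d)/d=o(1)$, one obtains
\[
E_\Delta(z)=-z\,M(z)+O(z),\qquad M(z):=\sum_{d\le z}\frac{\mu(d)}{d}\,\psi(z/d).
\]
Since $\sum_i c_i\,O(a_i x)=O(x)=o\big(x\sqrt{\log\log x}\big)$, it is enough to prove
\[
G(x):=\sum_{i=1}^k c_i a_i\,M(a_i x)=\Omega_\pm\big(\sqrt{\log\log x}\big).
\]

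Montgomery's argument provides, for the single function $M$, arguments at which $|M(z)|$ is of size $\sqrt{\log\log z}$ with a prescribed sign. Recast as a resonance (Montgomery's method for extreme values), it yields, for each sign and each large $y$, a nonnegative weight $w=w_y$ of compact support, built from a short exponential sum whose frequencies are chosen so that many of the terms $\tfrac{\mu(d)}{d}\psi(\cdot/d)$ reinforce, with
\[
\frac{1}{\int w}\int M(a_j x)\,w(x)\,dx\;\gg\;\sqrt{\log\log y}
\]
for one fixed index $j$ --- say the index $j$ with $|c_j|\,a_j$ largest. The plan is to build this resonator tuned to the single dilation $a_j$ and then to argue that it is essentially invisible at the remaining scales.

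The key new estimate is the \emph{scale-separation} bound
\[
\frac{1}{\int w}\int M(a_i x)\,w(x)\,dx=o\big(\sqrt{\log\log y}\big)\qquad(i\ne j).
\]
Granting it, $\frac{1}{\int w}\int G\,w=c_j a_j\cdot\big(\gg\sqrt{\log\log y}\big)+\sum_{i\ne j}c_i a_i\cdot o\big(\sqrt{\log\log y}\big)\gg\sqrt{\log\log y}$, so $G$ assumes values of that sign and magnitude on $\operatorname{supp}w_y$; running the construction for both signs and letting $y\to\infty$ gives $G(x)=\Omega_\pm(\sqrt{\log\log x})$. To prove the bound, note that the $a_j$-resonator constrains $x$ through conditions forcing $a_j x/d$ near prescribed points mod~$1$ for $d$ in a controlled family, so that $a_i x/d=(a_i/a_j)(a_j x/d)$ has fractional part the image of a prescribed point under multiplication by the fixed rational $a_i/a_j$. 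When $a_j\nmid a_i$ this image is transverse to the arithmetic of $M(a_i x)$, and summing $\tfrac{\mu(d)}{d}$ against the resulting bounded quantity inherits the cancellation present in $\sum_{d\le x}\mu(d)/d$ (equivalently, in the behaviour of $1/\zeta(s)$ near $s=1$), hence is $o(1)$ --- harmless. The obstructive case is $a_j\mid a_i$, for example $a_i=2a_j$: multiplication by the integer $a_i/a_j$ carries near-integers to near-integers, so a resonance for $M(a_j x)$ threatens to induce one for $M(a_i x)$.

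I would meet this obstacle by exploiting the slack in the resonator --- the window of admissible residues at each prime it uses, and the freedom in which primes to use --- to steer the induced frequencies clear of the exceptional frequencies of every other $a_i$ at once; the set of $d$ that could spoil the estimate for a given $i$ has small $\sum 1/d$-mass, and only finitely many $i$ must be avoided. Carrying out this simultaneous avoidance while keeping enough resonance strength to reach $\sqrt{\log\log y}$ (rather than merely $\sqrt{\log\log\log y}$, which the extremely sparse resonant sets of Chowla--Pillai type already yield) is where essentially all the difficulty lies. It is exactly the point that never arises in the all-$c_i$-positive case treated by Theorem~\ref{thm: error term independence}, where the several $M(a_i x)$ need only be made simultaneously large with a common sign and no cancellation between them has to be excluded; allowing opposite signs forces one to decouple the scales, which is the content of the conjecture.
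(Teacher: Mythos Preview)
The statement you are attempting is a \emph{conjecture} in the paper, not a theorem; the paper offers no proof. In the closing remark after the proof of Proposition~\ref{prop: Totient key proposition}, the authors explain that the positivity of the $c_j$ enters only to guarantee that the leading constant $\kappa_{\bf a,b}$ in equation~\eqref{innermost sum} is nonzero, and that by modifying the choice of $Q_{N,\bf b}$ they could handle arbitrary signs only for $k\le 3$. So there is no ``paper's own proof'' to compare against; the problem is open.

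Your proposal is a strategy sketch rather than a proof, and you say as much yourself. The idea of tuning the resonator to a single index $j$ and proving a scale-separation bound for $i\ne j$ is a genuinely different line from the paper's (which makes all $K(Q_{N,\bf b}b_j,\alpha b_j)$ large with a common sign simultaneously), and it is a reasonable thing to try. But the central claim --- that the $a_j$-resonator contributes $o(\sqrt{\log\log y})$ to the averages of $M(a_i x)$ for $i\ne j$ --- is asserted, not established. The ``transversality'' argument for $a_j\nmid a_i$ is heuristic: multiplication by the rational $a_i/a_j$ does not in any obvious way decorrelate the fractional parts from the arithmetic of $M$, and the appeal to cancellation in $\sum\mu(d)/d$ is not made precise. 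More seriously, you correctly identify the divisibility case $a_j\mid a_i$ as the real obstruction and then concede that ``carrying out this simultaneous avoidance while keeping enough resonance strength'' is ``where essentially all the difficulty lies.'' That is exactly the missing idea: you have located the hard step but not supplied it. As written, the proposal does not go beyond what the paper already acknowledges as unresolved.
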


\noindent This would imply a stronger version of Theorem \ref{thm: polygon lower bound} that holds for any rational polygon $\mathcal{A}$, not necessarily star-shaped or containing the origin:

\begin{conjecture}
Let $\A\subset\R^2$ be a rational polygon. Then 
\[
E_{\A}(t)=\Omega_{\pm}(t\sqrt{\log\log t}),
\]
where the implicit constant may depend on~$\A$.
\end{conjecture}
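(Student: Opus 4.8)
The plan is to assume the preceding Conjecture (the signed strengthening of Theorem~\ref{thm: error term independence}) and deduce the polygon statement from it by a reduction that parallels the proof of Theorem~\ref{thm: polygon lower bound}; the only new feature is that dropping the star‑shapedness hypothesis forces the coefficients in the relevant linear combination of translates of $E_\Delta$ to acquire signs, which is exactly what the Conjecture is built to accommodate. Since $E_{D\A}(t)=E_\A(Dt)$ and the class $\Omega_\pm\big(t\sqrt{\log\log t}\big)$ is invariant under $t\mapsto Dt$, I would first clear denominators and assume $\A$ has vertices in $\mathbb Z^2$.

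Next I would write $\#(t\A\cap\P)$ as a \emph{signed} sum of primitive‑point counts of corner triangles. Listing the vertices of $\A$ cyclically as $v_1,\dots,v_n$, the shoelace identity $\mathbf 1_\A=\sum_i\epsilon_i\mathbf 1_{\mathrm{conv}(0,v_i,v_{i+1})}$ (valid modulo a measure‑zero set whether or not $0\in\A$, with $\epsilon_i=\mathrm{sign}\det(v_i,v_{i+1})\in\{0,\pm1\}$) gives
\[
\#(t\A\cap\P)=\sum_i\epsilon_i\,\#\big(t\,\mathrm{conv}(0,v_i,v_{i+1})\cap\P\big)+O(t),
\]
the error absorbing the primitive points on the $O(1)$ radial segments $t[0,v_i]$. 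Each corner triangle is reduced by an element of $\mathrm{GL}_2(\mathbb Z)$ (which preserves both $\mathbb Z^2$ and $\P$) to the normal form $\mathrm{conv}(0,(g,0),(b,c))$ with $g,c>0$, and is then cut, according to the position of $(b,c)$, into cones from the origin $\{0\le x\le A,\ 0\le y\le\alpha x\}$ together with a residual piece lying under the far edge. For the cones one uses the building block
\[
\#\{(m,n)\in\P:\ 0<m\le s,\ 0\le n\le\alpha m\}=\tfrac{3\alpha}{\pi^2}s^2+\alpha E_\Delta(s)+O(s)\qquad(\alpha\in\mathbb Q_{>0}),
\]
obtained from $\sum_{d\le s}\mu(d)\sum_{e\le s/d}\lfloor\alpha e\rfloor$ by the same Dirichlet–hyperbola/Möbius manipulation that underlies $E_\Delta(s)=-s\sum_{d\le s}\frac{\mu(d)}{d}\{s/d\}+O(s)$; for the residual far‑edge piece a companion computation, resting on the elementary identity $\sum_{d\le t}\frac{\mu(d)}{d}\big\{\tfrac{Pt}{d}\big\}=-\tfrac1{Pt}E_\Delta(Pt)+O(1)$ for a fixed positive integer $P$, again produces only $E_\Delta$‑terms with rational dilation parameters. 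Collecting everything and grouping equal parameters yields
\[
E_\A(t)=\sum_{j=1}^k c_j\,E_\Delta(f_j t)+O(t)
\]
with $c_j\in\mathbb Q$ and the $f_j$ distinct positive rationals — in fact positive integers, once $\A$ is taken integral, since $\gcd(v_{i+1}-v_i)$ divides $\det(v_i,v_{i+1})$.

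It then remains to observe that the coefficient vector $(c_1,\dots,c_k)$ is not identically zero and to invoke the Conjecture. For the non‑degeneracy one can track the contribution of the edge (equivalently, the corner triangle) realizing the largest dilation parameter $f_j=|\det(v_i,v_{i+1})|/\gcd(v_{i+1}-v_i)$: in the normal form that triangle contributes a single term $\pm\frac{\gcd(c,g-b)^2}{cg}\,E_\Delta\!\big(\tfrac{cg}{\gcd(c,g-b)}t\big)$, whose parameter $cg/\gcd(c,g-b)$ is too large to be matched or cancelled by any other contribution, so $c_j\neq0$ there; alternatively, non‑degeneracy is equivalent to $E_\A(t)\neq O(t)$, which for a genuine rational polygon follows from a Chowla–Pillai–style argument applied directly to $\A$. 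Granting this, the Conjecture gives $\sum_j c_j E_\Delta(f_j t)=\Omega_\pm\big(t\sqrt{\log\log t}\big)$, and since $t\sqrt{\log\log t}$ dominates the $O(t)$ error we conclude $E_\A(t)=\Omega_\pm\big(t\sqrt{\log\log t}\big)$.

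I expect the main obstacle to be the bookkeeping in the middle step: faithfully tracking all of the $E_\Delta$‑terms to within an $O(t)$ error through the $\mathrm{GL}_2(\mathbb Z)$‑reductions and the case analysis for the position of $(b,c)$ — in particular the residual far‑edge pieces, which do not reduce to cones from the origin and must be handled by the companion Möbius computation above — together with the verification that the resulting coefficients do not conspire to cancel. The analytic core, namely the $\Omega_\pm$ lower bound for signed combinations of translates of $E_\Delta$, is precisely the content of the Conjecture being assumed, so no new oscillation estimates are required.
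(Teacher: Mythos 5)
The proposal takes the same route the paper indicates for this statement: derive it from the preceding conjecture (the signed strengthening of Theorem~\ref{thm: error term independence}) via a decomposition of $E_{\A}(t)$ into a rational linear combination of dilates of $E_\Delta$ up to $O(t)$. The paper would supply that decomposition by citing Theorem~\ref{thm: error term decomposition}; you instead rebuild it by hand through the shoelace identity, $\mathrm{GL}_2(\mathbb Z)$ normalization of pointed triangles, and a cone/far-edge splitting, with the cone building block verified via the same M\"obius--hyperbola computation as Proposition~\ref{prop: formula for error term} (and indeed that building block checks out). You also correctly isolate the one step the paper's remark glosses over: one must rule out that, after grouping terms with equal $f_j$, \emph{all} the resulting coefficients vanish, since otherwise invoking the preceding conjecture yields nothing. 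Flagging this is valuable and is precisely where the implication ``Conjecture 1 $\Rightarrow$ Conjecture 2'' is not automatic.

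However, the argument you offer for this non-degeneracy does not close the gap. The ``largest dilation parameter'' claim is unsupported: nothing prevents several edges $(v_i,v_{i+1})$ of $\A$ from realizing the same maximal value of $|\det(v_i,v_{i+1})|/\gcd(v_{i+1}-v_i)$ with opposite signs $\epsilon_i$, nor do you show that the cone pieces and the far-edge residuals (which come with their own rational parameters, e.g.\ truncation abscissae $A$ for the cones and denominators $P$ in the companion identity) cannot produce an $E_\Delta$-term at that same parameter. ``Too large to be matched or cancelled by any other contribution'' needs an actual maximality argument across all three sources of $E_\Delta$-terms, not just across the far-edge pieces. The fallback --- that non-degeneracy is equivalent (assuming Conjecture 1) to $E_\A(t)\ne O(t)$, which would follow from ``a Chowla--Pillai--style argument applied directly to $\A$'' --- is stated without any detail, and adapting Chowla--Pillai to a general non-star-shaped rational polygon is itself a nontrivial task (indeed, if it were routine, the paper would presumably have stated this as a theorem rather than a conjecture). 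So the proposal identifies the right missing ingredient but does not actually supply it.
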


The rest of the paper is divided into two sections. In the next section, we show (Theorem~\ref{thm: error term decomposition}) that the error term $E_{\A}(t)$ may be rewritten as a linear combination of dilates of the totient error function $E_\Delta(t)$, thereby establishing Theorem~\ref{thm: polygon upper bound} and reducing Theorem~\ref{thm: polygon lower bound} to Theorem~\ref{thm: error term independence}. We then establish this latter theorem in the final section.

\section{Decomposing the error term}\label{sec: Error term}

We begin by giving a proof of Proposition~\ref{prop: t log t upper bound corollary}, not only for the sake of completeness, but also because the structure of the argument foreshadows how we will approach the main result of this section, namely Theorem~\ref{thm: error term decomposition} below. We use the term \emph{pointed triangle} to mean a triangle which has the origin as a vertex.


\begin{proof}[Proof of Proposition~\ref{prop: t log t upper bound corollary}]
We begin by quoting a reasonably precise estimate~\cite[special case of Theorem 2.1]{KrPo} for the number of primitive points inside a  domain $\B$: if $\B$ is convex and contains the origin, then
\begin{equation} \label{must contain 0}
\#(\B\cap\P) - \frac6{\pi^2} \Area(\B) \ll \max\{1, \omega \log \omega\},
\end{equation}
where $\omega$ is the diameter of $\B$. In particular, if $\C$ is a convex polygon containing the origin and $\B=t\C$ is a dilate, then the diameter of $\B$ is a constant multiple of $t$ and so 
\[
E_\C(t) \ll \max\{1,t\log t\},
\]
with the implicit constant depending on~$\C$.

However, any polygon $\A$ can be partitioned into sums and differences of finitely many pointed triangles, simply by triangulating $\A$ and noting that any triangle can be written as the signed sum of three pointed triangles. Let $\{\C_j\}_{j=1}^{n}$ denote these triangles, and $\epsilon_j\in\{-1,1\}$ the sign of $\C_j$, that is whether it is added or substracted, so that $\Area(A)=\sum_{j=1}^n \epsilon_j \Area(\C_j)$. Even though these triangles have sides in common, the number of double-counted lattice points on the $t$-dilation of each side is at most $O(t)$, and therefore the above bound (valid since pointed triangles are certainly convex polygons containing the origin) implies
\begin{align*}
E_{\A}(t) &= \#(t\A\cap\P) - \frac6{\pi^2}\Area(t\A) = \bigg( \sum_{j=1}^n \epsilon_j \#(t\C_j\cap\P) + O(t) \bigg) -\frac6{\pi^2}\Area(t\A) \\
&= \sum_{j=1}^n \epsilon_j\bigg( \frac6{\pi^2}\Area(t\C_j) + E_{\C_j}(t) \bigg) + O(t) - \frac6{\pi^2}\Area(t\A) \\
&= \sum_{j=1}^n \epsilon_j E_{\C_j}(t) + O(t) \ll t\log t
\end{align*}
for $t\ge2$, as desired.
\end{proof}

\begin{remark}
In~\cite{KrPo}, the bound~\eqref{must contain 0} is stated without the hypothesis that $\B$ contain the origin; however, this hypothesis is actually necessary. One can easily construct, using the Chinese remainder theorem, a square $\B$ of diameter $\omega$ containing no primitive lattice points whatsoever. The area of such a square is a constant times $\omega^2$, and therefore
$$
\big| \#(\B\cap\P) - \frac6{\pi^2}\Area(\B) \big| \gg \omega^2,
$$ which is incompatible with the claimed bound $\omega\log\omega$.

The error in the proof of~\cite[Theorem 2.1]{KrPo} comes when applying Lemma 2.3, which requires $\omega\ge1$, to a term of the form $\sum_{(\Delta/k)_1} f(kx)$; when $k>\omega$, the diameter of $\Delta/k$ is less than $1$, and so Lemma 2.3 cannot be applied. However, if $\Delta$ is a set containing the origin, then these sets $\Delta/k$ are sets with diameter less than $1$ that contain the origin, hence contain no primitive lattice points (indeed, no lattice points at all other than the origin). Therefore the sums over $k$ in the proof of~\cite[Theorem 2.1]{KrPo} can be truncated at $k\le \omega$, and the rest of the argument goes through thereafter. \end{remark}


The remainder of this section is dedicated to proving Theorem~\ref{thm: error term decomposition}, which asserts that the error term $E_{\A}(t)$ for any rational polygon $\A$ may be rewritten in terms of the totient error function $E_\Delta(t)$. We begin with a detailed investigation of this latter function.

\begin{definition}
\label{def: fractional notation}
Throughout this section, we employ the notation $\lfloor x\rfloor$ for the greatest integer not exceeding $x$ and $\{x\} = x-\lfloor x\rfloor$ for the fractional part of $x$. We also employ the sawtooth function defined as 
\[
B(x)= \{x\} -\frac{1}{2}
\]
for all real numbers $x$; this function is equal to the first periodic Bernoulli polynomial $\bar{B}_1(x)$ except at integer arguments. 
\end{definition}

Next we recall some well-known and useful estimates for sums involving the M\"obius $\mu$-function, which satisfies the key identity
\begin{equation}\label{eq: key M\"obius identity}
\sum_{d|n}\mu(d)=\begin{cases}
1, & \text{if }n=1,\\
0, & \text{otherwise.}
\end{cases}
\end{equation}

For any real numbers $2\le x \le y$:
\begin{align}
\sum_{d\le x} \frac{\mu(d)}{d^2} = \frac6{\pi^2} + O\bigg( \frac1x \bigg);\\
\sum_{d\le y} \mu(d) \bigg\lfloor \frac xd \bigg\rfloor = 1;\\
\bigg| \sum_{d\le x} \frac{\mu(d)}{d} \bigg| \ll 1.
\end{align}

The proof of (2.3) and (2.5) is in \cite{Jameson} and (2.4) can be found in \cite{Apostol}, for instance. We are now ready to give a more precise formula for the totient error term $E_\Delta(t)$.

\begin{proposition} \label{prop: formula for error term}
For any real numbers $T\ge t\ge2$,
\begin{equation*}
E_{\Delta}(t)=-t\sum_{d\le T}\frac{\mu(d)}{d}B\bigg( \frac{t}{d}\bigg) +O(T).
\end{equation*}
\end{proposition}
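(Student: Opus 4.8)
The plan is to start from the classical identity
\[
\#(t\Delta\cap\P) = 1 + \sum_{1\le m\le t}\phi(m) = 1 + \sum_{1\le m\le t}\sum_{d\mid m}\mu(d)\frac md,
\]
using the standard formula $\phi(m) = \sum_{d\mid m}\mu(d)(m/d)$. Swapping the order of summation, one writes $m = de$ and obtains a double sum over $d\le t$ and $e\le t/d$, namely $\sum_{d\le t}\mu(d)\sum_{e\le t/d} e = \sum_{d\le t}\mu(d)\cdot\frac12\lfloor t/d\rfloor(\lfloor t/d\rfloor+1)$. The next step is to replace $\lfloor t/d\rfloor$ by $t/d - B(t/d) - \frac12$ (using $\lfloor x\rfloor = x - \{x\} = x - B(x) - \frac12$) and expand the resulting quadratic in $t/d$. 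The main term $\frac{t^2}{2}\sum_{d\le t}\mu(d)/d^2$ reproduces $\frac3{\pi^2}t^2$ via Lemma~\ref{lem: M\"obius lemma}(a), with an error of size $O(t^2\cdot\frac1t) = O(t)$; collecting everything shows $E_\Delta(t)$ equals $-t\sum_{d\le t}\frac{\mu(d)}{d}B(t/d)$ plus lower-order pieces. The cross terms and the $B(t/d)^2$, $B(t/d)$ terms each contribute $O\big(\sum_{d\le t}\frac td\big) = O(t\log t)$ at worst, but more careful bookkeeping — using Lemma~\ref{lem: M\"obius lemma}(c) to handle $t\sum_{d\le t}\mu(d)/d$ and the triviality $|B|\le\frac12$ together with $\sum_{d\le t}|\mu(d)| \le t$ for the quadratic term — brings all of these down to $O(t)$, which is absorbed into the claimed $O(T)$ since $T\ge t$.

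The only remaining point is to extend the range of summation from $d\le t$ to $d\le T$. Here I would use that for $d > t$ one has $t/d < 1$, so $\lfloor t/d\rfloor = 0$ and hence $B(t/d) = \{t/d\} - \frac12 = t/d - \frac12$. Therefore
\[
-t\sum_{t<d\le T}\frac{\mu(d)}{d}B\Big(\frac td\Big) = -t\sum_{t<d\le T}\frac{\mu(d)}{d}\Big(\frac td - \frac12\Big) = -t^2\sum_{t<d\le T}\frac{\mu(d)}{d^2} + \frac t2\sum_{t<d\le T}\frac{\mu(d)}{d}.
\]
By Lemma~\ref{lem: M\"obius lemma}(a) the first sum is $O(t^2/t) = O(t)$, and by Lemma~\ref{lem: M\"obius lemma}(c) the second is $O(t)$; both are $O(T)$. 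So appending the tail $t < d \le T$ to the sum changes it only by $O(T)$, which proves the proposition in the stated form with $T$ as the truncation parameter.

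I expect the main obstacle to be the careful accounting of the error terms arising from the expansion of $\frac12\lfloor t/d\rfloor(\lfloor t/d\rfloor+1)$: one must verify that the pieces other than $-t\sum\frac{\mu(d)}{d}B(t/d)$ genuinely collapse to $O(T)$ rather than $O(t\log t)$, which requires invoking part (c) of the Möbius lemma (not merely the trivial bound $\sum 1/d \ll \log t$) for the linear-in-$d^{-1}$ terms, and being slightly clever — or just using $|\mu(d)B(t/d)^2| \le \frac14$ and summing at most $t$ terms — for the quadratic term. Everything else is a routine rearrangement of sums and an application of the identity $\lfloor x\rfloor = x - B(x) - \tfrac12$.
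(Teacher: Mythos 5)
Your proof is correct and follows essentially the same route as the paper's: M\"obius inversion to reach $\sum_{d}\mu(d)\sum_{k\le t/d}k$, expansion of the floor function, and appeal to the three parts of Lemma~\ref{lem: M\"obius lemma}. The only difference is organizational --- the paper extends the range to $d\le T$ \emph{before} expanding the floor (where the added terms are identically zero since $\lfloor t/d\rfloor=0$ for $d>t$) and then uses part~(b) to evaluate the linear-in-$\lfloor\cdot\rfloor$ piece exactly, whereas you expand over $d\le t$ and bound the tail $t<d\le T$ afterwards via parts~(a) and~(c); both work.
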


\begin{proof}
Starting from equation~\eqref{eq: totient sum introduced} and the key identity~\eqref{eq: key M\"obius identity}, we have 

\begin{align*}
\#(t\Delta\cap\P) &= 1 + \sum_{1\le m\leq t}\sum_{\substack{
1\le n\leq m\\
\gcd(m,n)=1
}}1 = \sum_{m\leq t}\sum_{\substack{
n\leq m\\
\gcd(m,n)=1
}}\sum_{d\mid\gcd(m,n)}\mu(d) \\
&= \sum_{d\leq t}\mu(d)\sum_{\substack{
m\leq t\\
d\mid m
}}\sum_{\substack{
n\leq m\\
d\mid n
}}1 = \sum_{d\leq t}\mu(d)\sum_{k\le t/d} \sum_{\ell\le k}1 \\
&= \sum_{d\le t}\mu(d)\sum_{k\le t/d} k = \sum_{d\le t}\mu(d)\frac{1}{2}\bigg(\bigg\lfloor\frac{t}{d}\bigg\rfloor^{2}+\bigg\lfloor\frac{t}{d}\bigg\rfloor\bigg) \\
&= \frac{1}{2}\sum_{d\le t}\mu(d)\bigg\lfloor\frac{t}{d}\bigg\rfloor^{2}+\frac{1}{2}
\end{align*}
by (2.4). The last sum does not change if we take it for all $d\le T$ instead of $d\le t$. Expanding $\lfloor \frac td \rfloor^{2}=(\frac td - \{\frac td\})^{2}$ we obtain 
\begin{align*}
\#(t\Delta\cap\P) &= \frac{t^{2}}{2}\sum_{d\le T}\frac{\mu(d)}{d^{2}}-t\sum_{d\le T}\frac{\mu(d)}{d}\bigg\{ \frac{t}{d}\bigg\} + \sum_{d\le T} \mu(d) \bigg\{ \frac{t}{d}\bigg\}^2 +\frac12 \\
&= \frac{t^{2}}{2} \bigg( \frac6{\pi^2} + O\bigg(\frac1T \bigg) \bigg) -t\sum_{d\le T}\frac{\mu(d)}{d}\bigg\{ \frac{t}{d}\bigg\} +O(T)
\end{align*}
by (2.3) and a trivial bound. Therefore
\begin{align*}
E_{\Delta}(t) &= \#(t\Delta\cap\P) - \frac{6}{\pi^{2}}t^{2}\Area(\Delta) 
= \frac{3}{\pi^{2}}t^{2}-t\sum_{d\le T}\frac{\mu(d)}{d}\bigg\{ \frac{t}{d}\bigg\} +O(T)  -\frac{3}{\pi^{2}}t^{2} \\
&= -t\sum_{d\le T}\frac{\mu(d)}{d}\bigg( B\bigg( \frac{t}{d}\bigg) + \frac12 \bigg) +O(T) 
= -t\sum_{d\le T}\frac{\mu(d)}{d} B\bigg( \frac{t}{d}\bigg) +O(T)
\end{align*}
by (2.5).
\end{proof}

Counting lattice points in the simplest way (each with weight $1$) is problematic in our present context.   First, we will be decomposing polygons into unions of triangles that share sides, and so double-counting lattice points on these shared boundaries would become an issue; second, any error term as large as the perimeter for counting lattice points in a triangle would result in an unacceptably large error term in the inclusion-exclusion method we use to detect primitive lattice points. Consquently, we employ a more convenient weighting in our lattice-point counting, namely the solid angle sum of a polygon.

%
%

\begin{definition}
For a point $p\in\mathbb{R}^2$, let $B(p;r)=\{ q:\ d(p-q)<r\}$ denote the ball of radius $r$, centered at $p$. Let $\lambda$ denote Lebesgue measure on $\mathbb{R}^{2}$, and define for any polygon $\A$
\[
\omega_{\A}(p)=\lim_{r\rightarrow0}\frac{\lambda(B(p;r)\cap\A)}{\lambda(B(p;r))}. 
\]
It follows directly from the definition that
\[
\omega_{\A}(p)=
\begin{cases}
0, & \text{if }p\notin\A, \\
1, & \text{if }p\text{ is in the interior of }\A, \\
1/2, & \text{if }p\text{ lies in the interior of an edge of } \A, \\
\theta_{p}/2\pi, & \text{if }p\text{ is a vertex of }\A,
\end{cases} 
\]
where $\theta_p$ is the angle at the vertex $p$.
Using this function, we define the solid angle sum of the $t$-dilate of the rational polygon $\A$ to be
\[
A_{\A}(t)=\sum_{p\in\mathbb{Z}^2} \omega_{t\A}(p).
\]
We also define the corresponding sum over primitive lattice points only:
\[
\P_{\A}(t)=\sum_{p\in\P}\omega_{t\A}(p).
\]
The benefit of the solid angle weighting is that both of these functions are additive, in the sense that $A_{\A\cup\B}(t) = A_{\A}(t) + A_{\B}(t)$ and $\P_{\A\cup\B}(t) = \P_{\A}(t) + \P_{\B}(t)$ for any polygons $\A$ and $\B$ with disjoint interiors.
\end{definition}

\begin{proposition}  \label{prop: solid angle sum formula}
Let $\A$ be a rational polygon. There exist a real number $C$, a positive integer $k$, and rational numbers $c_1,\dots,c_k,f_1,\dots,f_k$ where $f_i>0$ (all depending on $\A$) such that
\begin{equation} \label{eq: solid angle sum formula}
A_{\A}(t)=\Area(\A)t^2+Ct-t\sum_{j=1}^k c_j B(f_j t)+O(1),
\end{equation}
where the implicit constant may depend upon $\A$. Furthermore, when $\mathcal{A}$ is an origin-star-shaped polygon the $c_j$ may be taken to all be positive.
\end{proposition}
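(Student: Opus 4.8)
The plan is to reduce the statement to \emph{pointed} triangles (those having the origin as a vertex) and then compute $A_T(t)$ for a single such triangle by slicing along horizontal lattice lines. Triangulate $\A$; exactly as in the proof of Proposition~\ref{prop: t log t upper bound corollary}, each triangle of the triangulation is a signed combination of three pointed triangles obtained by joining its vertices to the origin. The point of weighting lattice points by solid angle rather than by $1$ is that $\B\mapsto A_\B(t)$ is additive over finite unions with pairwise disjoint interiors, and the associated (signed) valuation identities then give $A_\A(t)=\sum_j\epsilon_jA_{T_j}(t)$ \emph{exactly}, with the $T_j$ pointed rational triangles and $\epsilon_j\in\{-1,+1\}$ — no error term from shared edges. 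When $\A$ is origin-star-shaped, joining the origin to every vertex already partitions $\A$ into pointed triangles with pairwise disjoint interiors, so in that case all $\epsilon_j=+1$; this is why the $c_j$ will come out nonnegative, once we check that each pointed triangle contributes a sawtooth term of a fixed sign.

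For a single pointed rational triangle $T=\mathrm{conv}\{0,v,w\}$, first apply a transformation in $GL_2(\mathbb{Z})$ — which fixes $\mathbb{Z}^2$ setwise, carries solid angles to solid angles, preserves areas and the class of rational polygons, and hence leaves $A_T(t)$ unchanged — taking the primitive lattice vector in the direction of the edge $[0,v]$ to $(1,0)$; after a coordinate reflection if necessary we may assume $T=\mathrm{conv}\{0,(a,0),(b,c)\}$ with $a,c\in\mathbb{Q}$, $a,c>0$, $b\in\mathbb{Q}$. Now write $A_T(t)=\sum_{y\in\mathbb{Z}}\sum_{x\in\mathbb{Z}}\omega_{tT}(x,y)$. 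For each integer $y$ with $0<y<tc$ the slice of $tT$ at height $y$ is a genuine segment $[\ell(y),r(y)]\times\{y\}$ with $\ell(y)=\tfrac bc y$ and $r(y)=ta+\tfrac{b-a}{c}y$, both affine in $y$ and with $r(y)-\ell(y)=a\bigl(t-\tfrac yc\bigr)$; the elementary identity
\[
\sum_{x\in\mathbb{Z}}\omega_{[\ell,r]\times\{y\}}(x,y)=(r-\ell)-\bar B_1(r)+\bar B_1(\ell),
\]
where $\bar B_1$ is the periodic Bernoulli function (equal to $B$ off the integers), splits the contribution of these levels into $\sum_y\bigl(r(y)-\ell(y)\bigr)$, the sum $\sum_y\bar B_1(\ell(y))$, and $-\sum_y\bar B_1(r(y))$. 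The remaining levels ($y\le0$, $y=\lceil tc\rceil$, $y\ge tc$) contribute only $O(1)$ beyond the bottom edge $[0,ta]\times\{0\}$, whose relative-interior lattice points have weight $\tfrac12$ and therefore contribute $\tfrac12\lfloor ta\rfloor+O(1)=\tfrac a2 t+O(1)$.

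The sum $\sum_{1\le y\le\lfloor tc\rfloor}a\bigl(t-\tfrac yc\bigr)$ is an arithmetic-progression sum equal to $\tfrac{ac}{2}t^2-\tfrac a2 t+O(1)=\Area(T)t^2-\tfrac a2 t+O(1)$, whose linear part cancels the $\tfrac a2 t$ coming from the bottom edge. Since $\ell(y)=\tfrac bc y$ is independent of $t$, the sum $\sum_y\bar B_1(\ell(y))$ is a partial sum of a periodic sequence in $y$ and contributes $O(1)$ together with at most a term linear in $t$. The decisive term is $-\sum_{1\le y\le\lfloor tc\rfloor}\bar B_1\bigl(ta+\tfrac{b-a}{c}y\bigr)$: grouping $y$ into consecutive blocks of length $q$, the denominator of $\tfrac{b-a}{c}$, and using the distribution (multiplication) relation $\sum_{j=0}^{q-1}\bar B_1\bigl(x+\tfrac jq\bigr)=\bar B_1(qx)$ with $x=ta$, each block contributes exactly $\bar B_1(qat)$ and there are $\tfrac{tc}{q}+O(1)$ of them, so this term equals $-\tfrac{c}{q}\,t\,\bar B_1(qat)+O(1)$. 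Collecting everything,
\[
A_T(t)=\Area(T)t^2+C_Tt-\frac cq\,t\,\bar B_1(qat)+O(1)
\]
for a constant $C_T$; setting $f_T:=qa>0$, $c_T:=\tfrac cq>0$ (positive precisely because $c>0$) and passing from $\bar B_1$ to $B$ puts this in the form $\Area(T)t^2+C_Tt-c_T\,t\,B(f_Tt)+O(1)$. Summing over the triangulation yields \eqref{eq: solid angle sum formula} with $C=\sum_j\epsilon_jC_{T_j}$, $f_j=f_{T_j}>0$ rational, and $c_j=\epsilon_jc_{T_j}$ rational; in the origin-star-shaped case all $\epsilon_j=+1$, so all $c_j=c_{T_j}>0$.

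I expect the main obstacle to be the bookkeeping inside the single-triangle computation: tracking which lattice points lie on the moving edge $[(ta,0),(tb,tc)]$ and on the two edges through the origin, handling the levels near $y=0$ and $y=tc$ correctly, and — the point needing the most care — the behavior at those $t$ with $f_Tt\in\mathbb{Z}$, where the moving edge acquires $\asymp ct/q$ lattice points of weight $\tfrac12$; one must verify that this is exactly the correction that lets $\bar B_1(f_Tt)$ be replaced by $B(f_Tt)$ modulo $O(1)$, so that the clean statement of the proposition holds. Verifying that the various linear-in-$t$ boundary contributions combine into a single $C_Tt$ is routine once the slicing is set up.
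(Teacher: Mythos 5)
Your route diverges from the paper's at the key step: the paper simply cites the Le Quang--Robins formula \eqref{LQR} for pointed rational triangles and sums it over the signed decomposition, whereas you supply a self-contained slicing derivation of that formula. The reduction to pointed triangles and the positivity observation in the origin-star-shaped case match the paper's argument, and the slicing computation is essentially sound. One small imprecision: a map in $GL_2(\mathbb{Z})$ does \emph{not} carry solid angles to solid angles (it distorts the vertex angles $\theta_p$), so $A_T(t)$ is not literally invariant; but a triangle has only three vertices and the vertex contribution is $O(1)$, so the transformed count agrees up to $O(1)$, which is all you need.

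The step you yourself flag as needing the most care, however, contains a genuine error rather than a bookkeeping detail, and your proposed resolution is circular. The identity $\sum_x\omega_{[\ell,r]\times\{y\}}(x,y)=(r-\ell)-\bar{B}_1(r)+\bar{B}_1(\ell)$ already accounts exactly for the half-weight lattice points at $x=\ell(y)$ and $x=r(y)$; thus the $\asymp ct/q$ weight-$\tfrac12$ points on the moving edge at those $t$ with $f_Tt\in\mathbb{Z}$ are already built into the $\bar{B}_1$-formula you derive, and provide no further ``correction.'' The $\bar{B}_1$- and $B$-versions of your formula therefore genuinely differ by $\tfrac{c}{2q}\,t$ at every such $t$, which is not $O(1)$. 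A concrete check: for $T=\Delta$ your computation yields $a=b=c=q=1$, $C_T=0$, and $A_\Delta(t)=\tfrac12 t^2 - t\,\bar{B}_1(t)+O(1)$; at $t=n\in\mathbb{Z}$ this gives $\tfrac12 n^2+O(1)$, matching the exact value $A_\Delta(n)=\tfrac12 n^2$, whereas the $B$-version would give $\tfrac12 n^2+\tfrac n2+O(1)$. So the correct conclusion of your slicing argument is the one you actually proved, namely the formula with $\bar{B}_1$; it cannot be converted to the form with $B=\{x\}-\tfrac12$ modulo $O(1)$, and the clean statement should be read with $\bar{B}_1$ (equivalently, restricted to $t$ with all $f_jt\notin\mathbb{Z}$).
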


\begin{proof}
Le Quang and Robins \cite{LR} gave a precise formula for the solid angle sum of any rational pointed triangle 
$\mathcal{T}$:
\begin{equation}  \label{LQR}
A_{\T}(t)=\Area(\T)t^2+C(\T)t-t\sum_{j=1}^{k(\T)} c_j(\T) B(f_j(\T) t)+O(1)
\end{equation}
where $C(\T)$, $k(\T)$, $c_j(\T)$, $f_j(\T)$ are constants depending on $\mathcal{T}$ with $k(\T)$ a positive
 integer, and the $c_j(\T)$ and $f_j(\T)$ positive rational numbers. But the rational polygon $\A$ can be partitioned as a signed sum of a finite number of rational pointed triangles (as in the proof of Proposition~\ref{prop: t log t upper bound corollary}). Simply summing the formula~\eqref{LQR} over these finitely many triangles yields the desired formula~\eqref{eq: solid angle sum formula}, where $C$ is the sum of the $C(\T)$, and $\{f_j\}$ is the union of the $\{f_j(\T)\}$, and so on.
\end{proof}

The following theorem results from a careful examination of $\P_{\A}(t)$, which can be related to the solid angle sum $A_{\A}(t)$ using the M\"obius function as in the proof of Proposition~\ref{prop: formula for error term}.

\begin{theorem} \label{thm: error term decomposition}
Let $\A$ be a rational polygon. There exist a positive integer $k$ and rational numbers $r_1,\dots,r_k,f_1,\dots, f_k$ where $f_i>0$ such that
\begin{equation} \label{eq: error term decomposition}
E_{\A}(t)=\sum_{j=1}^{k}r_j E_\Delta(f_j t)+O(t),
\end{equation}
where the implicit constant may depend upon $\A$. Furthermore, if $\A$ is an origin-star-shaped polygon, then the $r_j$ may all be taken to be positive.
\end{theorem}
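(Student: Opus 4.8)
The plan is to mimic the proof of Proposition~\ref{prop: formula for error term}, but applied to the solid angle sum $\P_{\A}(t)$ rather than to the naive count $\#(t\Delta\cap\P)$, and then to convert the resulting identity for $\P_{\A}(t)$ back into an identity for the ordinary error term $E_{\A}(t)$ by bounding the discrepancy between $\P_{\A}(t)$ and $\#(t\A\cap\P)$. First I would apply M\"obius inversion to detect primitivity: writing $\P_{\A}(t) = \sum_{p\in\mathbb{Z}^2}\omega_{t\A}(p)\sum_{d\mid\gcd(p)}\mu(d) = \sum_{d}\mu(d)\sum_{q\in\mathbb{Z}^2}\omega_{t\A}(dq)$. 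The key observation, exactly as in Proposition~\ref{prop: formula for error term}, is that $\sum_{q\in\mathbb{Z}^2}\omega_{t\A}(dq) = \sum_{q\in\mathbb{Z}^2}\omega_{(t/d)\A}(q) = A_{\A}(t/d)$, since scaling the lattice by $d$ is the same as scaling the polygon by $1/d$ (and the solid-angle weight is scale-invariant at the relevant points). Because $(t/d)\A$ contains no nonzero lattice points once $d$ exceeds a constant multiple of $t$ (the diameter drops below the lattice minimum), the sum over $d$ truncates at $d \le c t$ for some constant $c = c(\A)$, up to an $O(1)$ contribution from the origin; this truncation is what lets us avoid the convergence subtleties flagged in the Remark about \cite{KrPo}.

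Next I would substitute the formula~\eqref{eq: solid angle sum formula} from Proposition~\ref{prop: solid angle sum formula} for each $A_{\A}(t/d)$. This yields
\[
\P_{\A}(t) = \sum_{d\le ct}\mu(d)\left( \Area(\A)\frac{t^2}{d^2} + C\frac td - \frac td \sum_{j=1}^{\ell} c_j B\!\left(\frac{f_j t}{d}\right) + O(1) \right) + O(1).
\]
The first term is $\Area(\A)t^2\sum_{d\le ct}\mu(d)/d^2 = \frac6{\pi^2}\Area(\A)t^2 + O(t)$ by Lemma~\ref{lem: M\"obius lemma}(a); the second term is $O(t)$ by Lemma~\ref{lem: M\"obius lemma}(c) (after pulling out a factor of $t$); and the final $O(1)$ terms contribute $O(t)$ in total since there are $O(t)$ of them. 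What remains is $-t\sum_{j=1}^{\ell} c_j \sum_{d\le ct}\frac{\mu(d)}{d} B\!\left(\frac{f_j t}{d}\right)$. Here I would invoke Proposition~\ref{prop: formula for error term} itself, rescaled: with $t$ replaced by $f_j t$ and $T = ct$ (valid provided $ct \ge f_j t$, i.e.\ $c$ chosen large enough), that proposition gives $\sum_{d\le ct}\frac{\mu(d)}{d}B(f_j t/d) = -\frac1{f_j t}E_\Delta(f_j t) + O(1)$. Substituting, the main term cancels against $\frac6{\pi^2}\Area(\A)t^2$ and we obtain $\P_{\A}(t) - \frac6{\pi^2}\Area(\A)t^2 = \sum_{j=1}^{\ell} \frac{c_j}{f_j} E_\Delta(f_j t) + O(t)$, which is the desired shape with $r_j = c_j/f_j$.

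The last ingredient is the passage from $\P_{\A}(t)$ to $\#(t\A\cap\P) = E_{\A}(t) + \frac6{\pi^2}\Area(\A)t^2$: these differ only by the weights assigned to primitive lattice points lying on the boundary of $t\A$, each such point contributing a weight in $[-1,1]$, and there are $O(t)$ lattice points on the boundary of $t\A$ (each edge of $t\A$ is a segment of length $O(t)$). Hence $|\P_{\A}(t) - \#(t\A\cap\P)| = O(t)$, and the two error terms differ by $O(t)$, completing the proof of~\eqref{eq: error term decomposition}. Finally, the positivity clause is immediate: when $\A$ is origin-star-shaped, Proposition~\ref{prop: solid angle sum formula} furnishes $c_j > 0$, and since $f_j > 0$ as well, the constants $r_j = c_j/f_j$ are all positive. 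The main obstacle I anticipate is bookkeeping rather than conceptual: making sure the truncation point $ct$ can be chosen uniformly so that it simultaneously (i) exceeds $f_j t$ for every $j$, so Proposition~\ref{prop: formula for error term} applies to each term, and (ii) is large enough that $(t/d)\A$ is lattice-point-free for $d > ct$; both just require $c$ to depend on $\A$ through $\max_j f_j$ and the diameter of $\A$. A minor subtlety worth stating carefully is the scale-invariance identity $\sum_{q}\omega_{t\A}(dq) = A_{\A}(t/d)$ at the level of solid angles, since the map $q\mapsto dq$ sends $\mathbb{Z}^2$ into itself and is a bijection onto $d\mathbb{Z}^2$; one should note that it does preserve the local angle weights because $\omega$ depends only on the local geometry, which is unchanged by the simultaneous dilation of lattice and region.
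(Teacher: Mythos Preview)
Your proposal is correct and follows essentially the same approach as the paper: apply M\"obius inversion to the solid-angle primitive sum $\P_{\A}(t)$, truncate the sum over $d$, substitute the solid-angle formula from Proposition~\ref{prop: solid angle sum formula}, use Lemma~\ref{lem: M\"obius lemma} and Proposition~\ref{prop: formula for error term} to evaluate the resulting pieces, and finally pass from $\P_{\A}(t)$ to $\#(t\A\cap\P)$ via an $O(t)$ boundary estimate. Your bookkeeping remarks about choosing the truncation parameter $c$ to dominate both $\diam(\A)$ and $\max_j f_j$ exactly mirror the paper's choice $T = t\max\{1,\diam(\A),f_1,\dots,f_k\}$, and the positivity clause is deduced identically via $r_j = c_j/f_j$.
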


\begin{remark}
Theorem~\ref{thm: polygon upper bound} follows immediately from Theorem~\ref{thm: error term decomposition} in light of the known upper bound~\eqref{eqn: Walfisz}. In addition, Theorem \ref{thm: polygon lower bound} follows immediately from the combination of Theorem~\ref{thm: error term decomposition} and Theorem \ref{thm: error term independence}; the latter theorem is proved in the next section.
\end{remark}

\begin{proof}
We commence by excluding the (non-primitive) origin and using the M\"obius identity (\ref{eq: key M\"obius identity}) to write
\[
\P_{\A}(t) = \sum_{(m,n)\in\mathbb{Z}^{2}\setminus(0,0)}\omega_{t\A}(m,n) \sum_{d\mid (m,n)}\mu(d).
\]
Interchanging the order of summation (valid since in reality there are only finitely many nonzero terms) and rescaling (which preserves solid angles),
\begin{align*}
\P_{\A}(t) &= \sum_{d=1}^\infty \mu(d)\sum_{\substack{(m,n)\in\mathbb{Z}^{2}\setminus(0,0) \\ d\mid m,\, d\mid n}}\omega_{t\A}(m,n) 
= \sum_{d=1}^\infty \mu(d)\sum_{(x,y)\in\mathbb{Z}^{2}\setminus(0,0)}\omega_{\frac{t}{d}\A}(x,y) \\
&= \sum_{d=1}^\infty \mu(d)\bigg(A_{\A} \bigg(\frac{t}{d}\bigg)-\omega_{\frac td\A}(0,0)\bigg)
\end{align*}
by the definition of $A_{\A}$, We may truncate the outer sum at any real number $T\ge t\diam(\A)$ is allowed because for larger values of $d$, the diameter of $\frac td\A$ is less than $1$, and hence $\frac td\A$ (which contains the origin) cannot contain any other lattice points. Proposition~\ref{prop: solid angle sum formula} now implies
\begin{align*}
\P_{\A}(t) &= \sum_{d\le T} \mu(d)\bigg(\Area(\A)\frac{t^{2}}{d^{2}}+C\frac{t}{d}-\frac{t}{d}\sum_{j=1}^{k}c_j B\bigg(f_j\frac{t}{d}\bigg)+O(1)\bigg) \\
&= \Area(\A)t^{2}\sum_{d\le T}\frac{\mu(d)}{d^{2}}+Ct\sum_{d\le T}\frac{\mu(d)}{d} \\
&\qquad{} -t\sum_{j=1}^{k}c_j\sum_{d\le T}\frac{\mu(d)}{d}B\bigg(f_j\frac{t}{d}\bigg)+O\bigg(\sum_{d\le T} |\mu(d)|\bigg) \\
&= \Area(\A)t^{2} \bigg( \frac6{\pi^2} + O\bigg( \frac1T \bigg) \bigg) + O(|C|t\cdot 1) -t\sum_{j=1}^{k}c_j\sum_{d\le T} \frac{\mu(d)}{d}B\bigg(f_j\frac{t}{d}\bigg)+O(T),
\end{align*}
Using Proposition~\ref{prop: formula for error term}, we may rewrite the above in terms of $E_\Delta(t)$ and obtain 
\begin{equation}  \label{eqn: formula for P_A(t)}
\P_{\A}(t)=\Area(\A)\frac{6}{\pi^{2}}t^{2}+\sum_{i=1}^{k}\frac{c_{i}}{f_{i}}E_{\Delta}(f_{i}t)+O(T)
\end{equation}
for any $T\ge t\max\{1,\diam(\A),f_1,\dots,f_k\}$.

The number of integer points on the boundary of $t\A$ is $O(t)$ since $t\A$ has finitely many sides, each of which has length $O(t)$. Consequently, 
\begin{equation*} 
\P_{\A}(t)-\#(t\A\cap\P) \ll t,
\end{equation*}
and therefore equation~\eqref{eqn: formula for P_A(t)} implies
\begin{align*}
E_{\A}(t)&=\#(t\A\cap\P)-\Area(\A)\frac{6}{\pi^{2}}t^2 \\
&=\P_{\A}(t)+O(t)-\Area(\A)\frac{6}{\pi^{2}}t^2 =\sum_{j=1}^{k}\frac{c_j}{f_j}E_{\Delta}(f_jt)+O(T).
\end{align*}
Upon setting $T= t\max\{1,\diam(\A),f_1,\dots,f_k\}$ and $r_j=c_j/f_j$ for each $1\le j\le k$, the theorem follows.
\end{proof}

\section{Linear combinations of $E_\Delta$}

In this section we prove Theorem \ref{thm: error term independence}, showing that positive rational linear combinations of scaled copies of the totient error function $E_\Delta$ have oscillations as large as those known for $E_\Delta$ itself. We begin by recalling some of the components of Montgomery's argument~\cite{Montgomery87} establishing the oscillations of $E_\Delta$, after which we describe the strategy that led to our modifications.

\begin{definition}
\label{R def}
Define
\[
R_0(x)= \sum_{n\le x} \frac{\phi(n)}n - \frac{6}{\pi^{2}}x.
\]
\end{definition}

Montgomery~\cite[Theorem 1]{Montgomery87} showed that the totient error function is closely connected to the above weighted error:

\begin{lemma}[Montgomery] \label{R0 to EDelta}
$E_\Delta(x) = xR_0(x) + O\big(x \exp(-c\sqrt{\log x})\big)$.
\end{lemma}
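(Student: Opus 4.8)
The plan is to compare $\sum_{n\le x}\phi(n)$ with $x\sum_{n\le x}\phi(n)/n$ by partial summation, and then to show that the resulting correction term is negligible. Writing $S(t)=\sum_{n\le t}\phi(n)/n=\frac6{\pi^2}t+R_0(t)$ and applying Abel summation to $\sum_{n\le x}\phi(n)=\sum_{n\le x}n\cdot\frac{\phi(n)}{n}$ gives
\[
\sum_{n\le x}\phi(n)=xS(x)-\int_1^x S(t)\,dt=\frac3{\pi^2}x^2+xR_0(x)-\int_1^x R_0(t)\,dt+O(1),
\]
so that $E_\Delta(x)=xR_0(x)-\int_1^x R_0(t)\,dt+O(1)$. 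The whole lemma therefore reduces to the single estimate $\int_1^x R_0(t)\,dt\ll x\exp(-c\sqrt{\log x})$.

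To establish this, observe that $\int_1^x S(t)\,dt=\sum_{n\le x}\frac{\phi(n)}{n}(x-n)$, so it is equivalent to show $\sum_{n\le x}\frac{\phi(n)}{n}(x-n)=\frac3{\pi^2}x^2+O(x\exp(-c\sqrt{\log x}))$. The coefficients $\phi(n)/n=\sum_{d\mid n}\mu(d)/d$ have Dirichlet series $\sum_{n\ge1}\frac{\phi(n)/n}{n^s}=\frac{\zeta(s)}{\zeta(s+1)}$, convergent for $\Re s>1$, and the smoothed Perron formula (arising from $\frac1{2\pi i}\int_{(c)}\frac{y^{s+1}}{s(s+1)}\,ds=\max(y-1,0)$) gives, for any fixed $c>1$, the exact identity
\[
\sum_{n\le x}\frac{\phi(n)}{n}(x-n)=\frac1{2\pi i}\int_{c-i\infty}^{c+i\infty}\frac{\zeta(s)}{\zeta(s+1)}\cdot\frac{x^{s+1}}{s(s+1)}\,ds,
\]
in which the kernel decays like $|s|^{-2}$ on vertical lines, so no truncation error is incurred.

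I would then shift the contour leftward onto the curve $\mathcal{C}$ given by $\Re s=-c_0/\log(|\Im s|+2)$ — the boundary of the classical zero-free region for $\zeta(s+1)$ — the shift being legitimate once the heights of the horizontal crossbars are chosen to avoid the ordinates of zeros of $\zeta(s+1)$, where $1/\zeta$ is under control. In the strip swept out, the only singularity of the integrand is the simple pole at $s=1$, whose residue is $\frac1{\zeta(2)}\cdot\frac{x^2}{2}=\frac3{\pi^2}x^2$; there is no contribution from $s=0$, since $\zeta(s)/\zeta(s+1)$ has a simple zero there (from the pole of $\zeta(s+1)$) which cancels the simple pole of $1/(s(s+1))$. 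Hence
\[
\sum_{n\le x}\frac{\phi(n)}{n}(x-n)=\frac3{\pi^2}x^2+\frac1{2\pi i}\int_{\mathcal{C}}\frac{\zeta(s)}{\zeta(s+1)}\cdot\frac{x^{s+1}}{s(s+1)}\,ds .
\]
On $\mathcal{C}$ one has $|x^{s+1}|=x\exp\bigl(-c_0\log x/\log(|\Im s|+2)\bigr)$, while the zero-free region together with standard bounds for $\zeta$ near and just to the left of the line $\Re s=1$ (via the functional equation) give $\zeta(s)/\zeta(s+1)\ll(|\Im s|+2)^{1/2+\varepsilon}$ and $1/(s(s+1))\ll(|\Im s|+2)^{-2}$. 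Splitting the integral at a height $T$ and choosing $\log T=\sqrt{\log x}$ balances the tail, of size $\ll xT^{-1/2+\varepsilon}$, against the remaining part, of size $\ll x\exp(-c_0\log x/\log T)$, and yields $\int_{\mathcal{C}}\ll x\exp(-c\sqrt{\log x})$. This gives the required estimate, and with it the lemma.

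The opening Abel summation and the bookkeeping in the Perron step are routine; the step I expect to be the main obstacle is the contour-shift estimate — justifying the displacement past the zeros of $\zeta(s+1)$, invoking the classical zero-free region together with the polynomial growth bounds for $\zeta$ to the left of the $1$-line, and optimizing the truncation height. All of this is standard prime-number-theorem machinery, and indeed the statement is essentially \cite[Theorem 1]{Montgomery87}.
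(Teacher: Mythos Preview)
Your argument is correct: the Abel summation reduction to bounding $\int_1^x R_0(t)\,dt$ is accurate, the smoothed Perron identity and the residue computation at $s=1$ (and the cancellation at $s=0$) are right, and the contour-shift estimate along the edge of the classical zero-free region, with the balancing choice $\log T\asymp\sqrt{\log x}$, is the standard way to obtain the saving $\exp(-c\sqrt{\log x})$.

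The paper itself, however, gives no proof of this lemma at all: it simply attributes the statement to Montgomery and cites \cite[Theorem~1]{Montgomery87}. So there is nothing in the paper to compare your argument to beyond that reference. Your sketch is essentially the classical route to Montgomery's result, and your closing remark that ``the statement is essentially \cite[Theorem 1]{Montgomery87}'' is exactly how the paper treats it.
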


\begin{definition}
\label{Montgomery quantities def}
Define
\[
K(q,\alpha) = {-} \sum_{d\mid q}\frac{\mu(d)}{d} B\bigg( \frac{\alpha}{d} \bigg),
\]
where the sawtooth function $B(x)$ was defined in Definition~\ref{def: fractional notation}, and
\[
C(q,\alpha) = K(q,\alpha) \frac{6}{\pi^{2}} \prod_{p\mid q} \bigg(1-\frac1{p^2} \bigg)^{-1}.
\]
\end{definition}

\begin{lemma} \label{K scale lemma}
If $b$ is relatively prime to $q$, then
\begin{equation*}
K(qb,\alpha b) = \sum_{d_1d_2=b}\frac{\mu(d_1)}{d_1} K(q,\alpha d_2).
\end{equation*}
\end{lemma}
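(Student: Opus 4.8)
The plan is to expand the left-hand side directly using the definition of $K$ and then factor the sum over divisors. Recall that
\[
K(qb,\alpha b) = -\sum_{e \mid qb} \frac{\mu(e)}{e} B\bigg( \frac{\alpha b}{e} \bigg).
\]
Since $\gcd(b,q)=1$, every divisor $e$ of $qb$ factors uniquely as $e = d_1 d_2$ with $d_1 \mid b$ and $d_2 \mid q$, and by multiplicativity $\mu(e) = \mu(d_1)\mu(d_2)$ and $e = d_1 d_2$. Substituting, the right-hand side becomes
\[
-\sum_{d_1 \mid b}\sum_{d_2 \mid q} \frac{\mu(d_1)\mu(d_2)}{d_1 d_2}\, B\bigg( \frac{\alpha b}{d_1 d_2}\bigg).
\]
The key observation is that $\frac{\alpha b}{d_1 d_2} = \frac{1}{d_2}\cdot\frac{\alpha b}{d_1}$, and since $d_1 \mid b$ the quantity $b/d_1$ is a positive integer; moreover I would like to write $\frac{\alpha b}{d_1 d_2}$ in a form matching the $B(\alpha'/d)$ pattern appearing in $K(q,\cdot)$. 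Setting aside the factor $\mu(d_1)/d_1$ and the outer sum over $d_1 \mid b$, what remains is
\[
-\sum_{d_2 \mid q} \frac{\mu(d_2)}{d_2} B\bigg( \frac{\alpha (b/d_1)}{d_2} \bigg) = K\big(q,\alpha(b/d_1)\big).
\]
Hence $K(qb,\alpha b) = \sum_{d_1 \mid b} \frac{\mu(d_1)}{d_1} K\big(q, \alpha(b/d_1)\big)$, which is exactly the claimed identity once we reindex by writing $d_2 = b/d_1$, i.e. summing over pairs $d_1 d_2 = b$.

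The one point requiring a moment of care — and the only real obstacle — is the interaction between the sawtooth function $B$ and the scaling: one must be sure that $B\big(\tfrac{\alpha(b/d_1)}{d_2}\big)$ is genuinely the term appearing in $K(q,\alpha d_2')$ with the roles of $d_1, d_2$ matched correctly to the reindexing $d_1 d_2 = b$. Concretely, in the target expression $\sum_{d_1 d_2 = b} \frac{\mu(d_1)}{d_1} K(q,\alpha d_2)$, the inner $K(q,\alpha d_2)$ unfolds to $-\sum_{c \mid q}\frac{\mu(c)}{c} B\big(\frac{\alpha d_2}{c}\big)$, so one checks that after the substitution $d_2 = b/d_1$, $c = d_2$ (old notation), the two double sums agree term by term. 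Since $B$ is applied to rational arguments and no floor/periodicity subtlety is introduced by integer rescaling of the numerator, there is nothing delicate beyond bookkeeping. This is genuinely a short computation, so I would simply carry out the expansion, invoke the coprime factorization of divisors of $qb$, and reindex.
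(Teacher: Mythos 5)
Your proof is correct and follows essentially the same route as the paper: expand $K(qb,\alpha b)$ from the definition, use the coprimality of $b$ and $q$ to factor each divisor of $qb$ uniquely as a divisor of $b$ times a divisor of $q$, apply multiplicativity of $\mu$, and recognize the inner sum over divisors of $q$ as $K(q,\alpha(b/d_1))$. The only difference is stylistic (you reuse the variable names $d_1,d_2$ in two roles, which you correctly untangle in the final reindexing); the mathematics matches the paper's argument step for step.
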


\begin{proof}
Since every divisor of $qb$ can be written uniquely as a divisor of $b$ times a divisor of $q$,
\begin{equation*}
K(qb,\alpha b) = {-} \sum_{d\mid qb}\frac{\mu(d)}{d} B\bigg( \frac{\alpha b}{d} \bigg) = {-} \sum_{a\mid b}\frac{\mu(a)}{a} \sum_{c\mid q}\frac{\mu(c)}{c} B\bigg( \frac{\alpha b}{ac} \bigg) = \sum_{a\mid b}\frac{\mu(a)}{a} K\bigg(q,\frac{\alpha b}a\bigg),
\end{equation*}
which is equivalent to the statement of the lemma.
\end{proof}

The above quantities appear~\cite[Lemma 4]{Montgomery87} in a key part of Montgomery's argument, which displays a bias in the values of $R_0$ sampled on an arithmetic progression:

\begin{lemma}[Montgomery] \label{Montgomery constant lemma}
There exists a positive real number $c$ such that if $\alpha$ is a non-integral real number with $0<\alpha<q$, then 
\[
\sum_{n=1}^{N}R_0(nq+\alpha)=C(q,\alpha)N+O\big(N\exp(-c\sqrt{\log N})\big)
\]
uniformly for $q\leq e^{c\sqrt{\log N}}$.
\end{lemma}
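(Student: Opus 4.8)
The plan is to reverse the order of summation so that $\sum_{n\le N}R_0(nq+\alpha)$ becomes a weighted count of lattice points below a line, expand that count using the sawtooth function $B$ of Definition~\ref{def: fractional notation}, and then isolate a single oscillating sum whose main term accounts for $C(q,\alpha)$. Writing $R_0(x)=\sum_{m\le x}\frac{\phi(m)}m-\frac6{\pi^2}x$ and interchanging the (finitely many nonzero) terms gives
\[
\sum_{n=1}^N R_0(nq+\alpha)=\sum_{m\le X}\frac{\phi(m)}m\,\#\{1\le n\le N:\ nq+\alpha\ge m\}-\frac6{\pi^2}\sum_{n=1}^N(nq+\alpha),\qquad X:=Nq+\alpha .
\]
Since $\alpha\notin\mathbb Z$, the quantity $(m-\alpha)/q$ is never an integer, and a short computation with ceilings shows that for every $m\le X$,
\[
\#\{1\le n\le N:\ nq+\alpha\ge m\}=N-\frac{m-\alpha}q+\tfrac12+B\Big(\frac{m-\alpha}q\Big)+\epsilon_m,
\]
where $|\epsilon_m|\le1$ and $\epsilon_m=0$ for $m>\alpha$, so $\sum_{m\le X}\frac{\phi(m)}m|\epsilon_m|\le\sum_{m\le\alpha}\frac{\phi(m)}m\ll q$. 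Substituting this, and using the definition $\sum_{m\le X}\frac{\phi(m)}m=\frac6{\pi^2}X+R_0(X)$ (with the classical bound $R_0(X)\ll\log X$) together with
\[
\sum_{m\le X}\phi(m)=\frac3{\pi^2}X^2+XR_0(X)+O\big(X\exp(-c\sqrt{\log X})\big),
\]
which is Lemma~\ref{R0 to EDelta} restated through the definition of $E_\Delta$, one checks after expanding $X=Nq+\alpha$ that every term of polynomial order ($N^2q$, $\alpha N$, $qN$) and every $R_0(X)$ term cancels against $\frac6{\pi^2}\sum_{n\le N}(nq+\alpha)$, leaving
\[
\sum_{n=1}^N R_0(nq+\alpha)=\sum_{m\le X}\frac{\phi(m)}mB\Big(\frac{m-\alpha}q\Big)+O(q)+O\big(N\exp(-c\sqrt{\log N})\big).
\]

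To evaluate the surviving sum I would use $\frac{\phi(m)}m=\sum_{e\mid m}\frac{\mu(e)}e$ to rewrite it as $\sum_{e\le X}\frac{\mu(e)}e\sum_{k\le X/e}B\big((ek-\alpha)/q\big)$. For fixed $e$ the inner summand is periodic in $k$ with period $q/\gcd(e,q)$, and the multiplication formula $\sum_{j=0}^{m-1}\bar B_1(x+j/m)=\bar B_1(mx)$ (all arguments non-integral here) shows that its sum over one full period is $-B\big(\alpha/\gcd(e,q)\big)$; summing over complete periods and bounding the incomplete one trivially gives
\[
\sum_{k\le X/e}B\Big(\frac{ek-\alpha}q\Big)=-\frac{X\gcd(e,q)}{eq}B\Big(\frac{\alpha}{\gcd(e,q)}\Big)+O\Big(\frac q{\gcd(e,q)}\Big).
\]
Hence $\sum_{m\le X}\frac{\phi(m)}mB\big((m-\alpha)/q\big)=-\frac Xq\sum_{e}\frac{\mu(e)\gcd(e,q)}{e^2}B\big(\alpha/\gcd(e,q)\big)+O(q\log X)$, the $e$-sum extended to infinity at a cost of $O(1)$. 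Grouping that Dirichlet series according to $g=\gcd(e,q)$, writing $e=ge'$ with $\gcd(e',q)=1$ (so $\mu(e)=\mu(g)\mu(e')$) and factoring out $\sum_{\gcd(e',q)=1}\mu(e')/(e')^2=\frac6{\pi^2}\prod_{p\mid q}(1-p^{-2})^{-1}$, the residual factor is $\sum_{g\mid q}\frac{\mu(g)}gB(\alpha/g)=-K(q,\alpha)$; so the series equals $-C(q,\alpha)$ in the notation of Definition~\ref{Montgomery quantities def}. Since $X/q=N+O(1)$ and $|C(q,\alpha)|\ll\sum_{d\mid q}\tfrac1d\ll\log\log q$, the oscillating sum equals $C(q,\alpha)N+O(q\log X)+O(\log\log q)$.

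Putting everything together, $\sum_{n\le N}R_0(nq+\alpha)=C(q,\alpha)N+O\big(q\log(Nq)\big)+O(q)+O(\log N)+O\big(N\exp(-c\sqrt{\log N})\big)$. I expect the main obstacle to be not the algebra but the uniformity in $q$: one must verify that every error term is $\ll N\exp(-c\sqrt{\log N})$ throughout the admissible range. The genuinely delicate contributions are the incomplete-period error $O(q\log(Nq))$ from the convolution step and the boundary defect $O(q)$ from the first step, and it is precisely the requirement that these be that small which dictates the size restriction $q\le e^{c\sqrt{\log N}}$: under that hypothesis $q\log(Nq)\ll e^{c\sqrt{\log N}}\log N\ll N\exp(-c'\sqrt{\log N})$ for a suitable $c'>0$ and all large $N$ (the finitely many small $N$ absorbed into the implied constant), while the error coming from the smooth main terms is $\ll \tfrac Xq\exp(-c\sqrt{\log X})\ll N\exp(-c\sqrt{\log N})$ thanks to the error term in Lemma~\ref{R0 to EDelta}. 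The cancellation of the polynomial main terms in the first step and the bookkeeping in the Dirichlet-series identity are routine but must be carried out with care.
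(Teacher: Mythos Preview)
The paper does not give its own proof of this lemma: it is quoted as \cite[Lemma~4]{Montgomery87}, with only the remark that Montgomery's argument rests on Raabe's identity $\sum_{j=1}^J B(x+j/J)=B(Jx)$ together with the representation $R_0(x)=-\sum_{d\le x}\frac{\mu(d)}dB(x/d)+O(1)$.

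Your argument is correct and reaches the same destination via a slightly different organization. From the paper's hint one infers that Montgomery inserts the $B$-formula for $R_0$ at the outset, obtaining a double sum $\sum_{n}\sum_{d}\frac{\mu(d)}dB\big((nq+\alpha)/d\big)$, and then applies Raabe in the $n$-variable. You instead interchange the $n$-sum with the defining $m$-sum for $R_0$, reduce the problem to evaluating $\sum_{m\le X}\frac{\phi(m)}mB\big((m-\alpha)/q\big)$, and only then open $\phi(m)/m$ as $\sum_{e\mid m}\mu(e)/e$ and apply Raabe in the $k$-variable ($m=ek$). The two routes converge on the same periodic-sum evaluation and the same Dirichlet-series identity yielding $C(q,\alpha)$; your grouping by $g=\gcd(e,q)$, with the observation that $\gcd(e',q/g)=1$ together with squarefreeness of $e$ forces $\gcd(e',q)=1$, is exactly what is needed to recover $K(q,\alpha)\cdot\frac6{\pi^2}\prod_{p\mid q}(1-p^{-2})^{-1}$. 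You also correctly identify the incomplete-period error $O(q\log X)$ as the bottleneck dictating the uniformity range $q\le e^{c\sqrt{\log N}}$, just as in Montgomery's treatment. One minor remark: when you say the $R_0(X)$ terms ``cancel'', a residual $\tfrac12R_0(X)=O(\log X)$ actually survives, but this is harmlessly absorbed into your stated error terms.
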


The proof of Lemma~\ref{Montgomery constant lemma} is based on the following ancient identity due to Raabe~\cite{Raabe} for a sum of $B( x )$ over an arithmetic progression of points: for all real numbers $x$,
\begin{equation*} \label{Bernoulli identity}
 \sum_{j=1}^J B\bigg( x + \frac jJ \bigg)=B(Jx).
\end{equation*}
valid for all real $x$; the relevance to the problem at hand is due to the fact~\cite[Lemma 1]{Montgomery87} that
\[
R_0(x) = {-} \sum_{d\le x} \frac{\mu(d)}d B\bigg( \frac xd \bigg) + O(1).
\]

To exploit Lemma~\ref{Montgomery constant lemma}, Montgomery chose $\alpha=q/4$, so that for all divisors $d$ of $q$ the quantity $B(\alpha/d)$ equals $\pm\frac14$, with the sign depending only upon the residue class of $d$ modulo~$4$. On the other hand, he chose $q$ to be the product of many primes congruent to $3$ modulo $4$; for any divisor $d$ of this $q$, the residue class of $d\mod 4$ depends only on the number of prime factors of~$d$. With these choices, the sign of $B(\alpha/d)$ correlates exactly with the sign of $\mu(d)$, making the quantity $K(q,\alpha)$ large in absolute value. Choosing $\alpha=3q/4$ instead again makes $K(q,\alpha)$ large but with the opposite sign.

One of our key observations is that we may work modulo a suitably chosen prime $P$ rather than working modulo~$4$. Instead of choosing $q$ to be the product of many primes congruent to $3\mod 4$, we instead choose $q$ to be the product of many primes that are quadratic nonresidues modulo~$P$. The sign of $B(\alpha/d)$ will not be perfectly correlated with $\mu(d)$, but there will be enough of a systematic bias in the signs of $B(\alpha/d)$ (due to the imperfect distribution of quadratic residues and nonresidues modulo~$P$) that we can still force $K(q,\alpha)$ to be large in absolute value. If we choose the prime $P$ carefully, we can even force all of the different $K(qb_{i},\alpha b_{i})$ to be large in absolute value and have the same sign.

We introduce the following function, whose oscillations we will want to establish. For the rest of this section, we use boldface variables such as $\bf a$ to indicate the dependence of various quantities on $k$-tuples $(a_1,\dots,a_k)$ of variables.

\begin{definition}
\label{G def}
Given real numbers $a_1,\dots,a_k$ and $b_1,\dots,b_k$, define
\[
G_{\bf a,b}(x)=a_{1}R_0(b_{1}x)+\cdots+a_{k}R_0(b_{k}x).
\]
\end{definition}

\noindent The starting point of our modification of Montgomery's method is the following easy consequence of Lemma~\ref{Montgomery constant lemma}.

\begin{lemma}
\label{lem: montgomery's lemma for G in progressions}
Let $a_1,\dots,a_k$ and $b_1,\dots,b_k>0$ be real numbers. There exists a positive real number $c$ such that: if $\alpha \in (0,q)$ is a real number such that none of the $\alpha b_j$ is an integer, then 
\[
\sum_{n=1}^{N}G_{\bf a,b}(nq+\alpha)=N\sum_{j=1}^{k}a_jC(qb_j, \alpha b_j)+O_{\bf a,b}(N\exp(-c\sqrt{\log N}))
\]
uniformly for $\max\{b_1,\dots,b_k\}<q\le e^{c\sqrt{\log N}}$.
\end{lemma}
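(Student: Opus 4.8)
The plan is to reduce the statement directly to Montgomery's Lemma~\ref{Montgomery constant lemma} term by term. By Definition~\ref{G def}, we have
\[
\sum_{n=1}^N G_{\bf a,b}(nq+\alpha) = \sum_{j=1}^k a_j \sum_{n=1}^N R_0\big(b_j(nq+\alpha)\big) = \sum_{j=1}^k a_j \sum_{n=1}^N R_0\big(n(qb_j) + \alpha b_j\big),
\]
so it suffices to understand each inner sum $\sum_{n=1}^N R_0(n q_j + \alpha_j)$ with $q_j = qb_j$ and $\alpha_j = \alpha b_j$. The hypotheses have been arranged precisely so that Lemma~\ref{Montgomery constant lemma} applies to each: the condition $\max\{b_1,\dots,b_k\} < q$ guarantees $0 < \alpha_j < q_j$ (since $0<\alpha<q$ forces $0 < \alpha b_j < q b_j = q_j$), the assumption that no $\alpha b_j$ is an integer supplies the non-integrality of $\alpha_j$, and if $q \le e^{c'\sqrt{\log N}}$ for Montgomery's constant $c'$ then $q_j = qb_j \le (\max_i b_i) e^{c'\sqrt{\log N}} \le e^{c\sqrt{\log N}}$ for a slightly smaller constant $c>0$ and all large $N$, so the uniformity range is respected simultaneously for every $j$.

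Applying Lemma~\ref{Montgomery constant lemma} to each $j$ then yields
\[
\sum_{n=1}^N R_0\big(n(qb_j)+\alpha b_j\big) = C(qb_j,\alpha b_j)\,N + O\big(N\exp(-c\sqrt{\log N})\big),
\]
and summing over $j$ with the weights $a_j$ gives
\[
\sum_{n=1}^N G_{\bf a,b}(nq+\alpha) = N\sum_{j=1}^k a_j C(qb_j,\alpha b_j) + O\Big(\Big(\sum_{j=1}^k |a_j|\Big) N\exp(-c\sqrt{\log N})\Big),
\]
which is the claimed estimate with an error term $O_{\bf a,b}\big(N\exp(-c\sqrt{\log N})\big)$ depending on ${\bf a}$ and ${\bf b}$ only through the finitely many quantities $\sum |a_j|$ and $\max b_j$ (the latter entering through the adjustment of the constant $c$).

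The only mild subtlety — and the single point worth stating carefully rather than a genuine obstacle — is the bookkeeping of the constant $c$: Montgomery's lemma provides some admissible $c'$, and we must shrink it enough that $qb_j \le e^{c'\sqrt{\log N}}$ holds throughout the stated range $q \le e^{c\sqrt{\log N}}$. Since $\log(qb_j) = \log q + \log b_j \le c\sqrt{\log N} + \log(\max_i b_i)$ and $\log(\max_i b_i) = o(\sqrt{\log N})$, any $c < c'$ works for $N$ sufficiently large, and we absorb the finitely many small $N$ into the implicit constant. No new ideas beyond Montgomery's Lemma~\ref{Montgomery constant lemma} are needed here; the lemma is essentially a linearity-and-rescaling repackaging.
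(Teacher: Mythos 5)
Your proof is correct and is exactly the ``easy consequence'' argument the paper has in mind (the paper states Lemma~\ref{lem: montgomery's lemma for G in progressions} without proof, deferring to Lemma~\ref{Montgomery constant lemma}): split $G_{\bf a,b}$ linearly, apply Montgomery's lemma with $q\mapsto qb_j$, $\alpha\mapsto\alpha b_j$ termwise, and shrink the constant $c$ to keep $qb_j\le e^{c'\sqrt{\log N}}$ for all $j$. One small slip: the inequality $0<\alpha b_j<qb_j$ follows from $0<\alpha<q$ and $b_j>0$ alone, not from the hypothesis $\max_j b_j<q$ as you state; that hypothesis is harmless padding in the lemma's range of validity and plays no role in the reduction to Lemma~\ref{Montgomery constant lemma}.
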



We now state our oscillation result for $G_{\bf a,b}$, after which we show how Theorem~\ref{thm: error term independence} is implied by it. Thereafter our only remaining goal will be to establish this proposition:

\begin{proposition}
\label{prop: Totient key proposition}
Let $a_1,\dots,a_k$ and $b_1,\dots,b_k$ be fixed positive integers. There exists a constant $\kappa_{\bf a,b}>0$, and sequences $Q_{N,\bf b}\le e^{\sqrt{\log N}}$ and $0<\alpha_N^+,\alpha_N^-<Q_{N,\bf b}$ defined for positive integers $N$, for which $Q_{N,\bf b}$ tends to infinity with $N$ and
\[
\sum_{n=1}^{N}G_{\bf a,b}(nQ_{N,\bf b}+\alpha_N^+)=\kappa_{\bf a,b} N\sqrt{\log\log N}+O_{\bf a,b}(N)
\]
and 
\[
\sum_{n=1}^{N}G_{\bf a,b}(nQ_{N,\bf b}+\alpha_N^-)=-\kappa_{\bf a,b} N\sqrt{\log\log N}+O_{\bf a,b}(N).
\]
\end{proposition}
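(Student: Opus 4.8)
\textbf{Proof proposal for Proposition~\ref{prop: Totient key proposition}.}

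The plan is to combine Lemma~\ref{lem: montgomery's lemma for G in progressions} with a careful choice of modulus $Q_{N,\bf b}$ and offsets $\alpha_N^\pm$, so that the main term $N\sum_j a_j C(qb_j,\alpha b_j)$ becomes large (of order $N\sqrt{\log\log N}$) and all $k$ summands reinforce rather than cancel. Recall that $C(q,\alpha)$ is essentially $K(q,\alpha)$ times a bounded positive factor $\frac6{\pi^2}\prod_{p\mid q}(1-p^{-2})^{-1}$, so it suffices to control $K(qb_j,\alpha b_j) = -\sum_{d\mid qb_j}\frac{\mu(d)}{d}B(\alpha b_j/d)$. Following the strategy sketched before the proposition, I would fix a prime $P$, take $q$ to be a product of primes that are quadratic non-residues mod $P$ (chosen so that $q$ is coprime to all the $b_j$), and set $\alpha$ so that $\alpha b_j/d$ lands at a controlled point modulo $1$ for each divisor $d$ of $qb_j$. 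With $q$ a product of $m$ non-residue primes $p_1,\dots,p_m$, a squarefree divisor $d$ with $r$ prime factors is a quadratic residue mod $P$ iff $r$ is even; choosing $\alpha$ appropriately (the analogue of Montgomery's $q/4$), the value $B(\alpha/d)$ is biased so that it correlates with $(-1)^r = \mu(d)$. Since $\sum_{d\mid q}\frac{\mu(d)}{d}$ has about $\prod_{i}(1-p_i^{-1})$ worth of ``mass'' distributed over $2^m$ divisors, and the known optimization (as in Montgomery, or Chowla–Pillai) shows that choosing $m$ primes up to roughly $\log N$ makes the resulting sum of size $\sqrt{\log\log N}$, I would take $Q_{N,\bf b}$ to be such a product with the number of prime factors $m \asymp \log\log N$, verifying $Q_{N,\bf b}\le e^{\sqrt{\log N}}$ (easy, since the product of the first $m$ primes is $e^{O(m\log m)}$ and $m\log m = O((\log\log N)\log\log\log N)=o(\sqrt{\log N})$).

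The key technical steps, in order: (i) use Lemma~\ref{K scale lemma} to express $K(Q_{N,\bf b}\,b_j,\alpha b_j) = \sum_{d_1 d_2 = b_j}\frac{\mu(d_1)}{d_1}K(Q_{N,\bf b},\alpha d_2)$, reducing everything to values $K(q,\beta)$ with $q$ the non-residue product and $\beta$ ranging over $\{\alpha d_2 : d_2\mid b_j\}$; (ii) choose $\alpha$ of the form $\alpha = q\gamma$ for a suitable rational $\gamma$ with small denominator (independent of $N$, depending only on $P$ and the $b_j$), so that all the relevant $B(\alpha d_2/d)$ with $d\mid q$ depend only on the residue class of $d$ modulo the fixed denominator, hence only on whether $d$ is a QR or non-QR mod $P$; (iii) carry out the main computation: $K(q,\alpha) = -\sum_{d\mid q}\frac{\mu(d)}{d}B(\alpha/d)$ becomes, after splitting by the QR/non-QR status of $d$, a sum of the form $\left(\tfrac12 - \{\text{bias}\}\right)\sum_{d\mid q,\ d \text{ non-QR}}\frac{|\mu(d)|}{d} - (\dots)\sum_{d\mid q,\ d\text{ QR}}\frac{|\mu(d)|}{d}$; using $\sum_{d\mid q}\frac{\mu(d)}d = \prod_{i=1}^m(1-p_i^{-1})$ and that the ``non-residue part'' dominates because every prime factor of $q$ is a non-residue, one gets $|K(q,\alpha)| \gg \sum_{d\mid q, d\ \text{odd \# of factors}}\frac{1}{d} \gg \prod_{i=1}^m(1+p_i^{-1}) - \prod_{i=1}^m(1-p_i^{-1}) \asymp \prod_{i=1}^m(1-p_i^{-1})^{-1}\cdot(\text{something})$; more precisely, the standard estimate yields $|K(q,\alpha)|\gg \sqrt{\sum_{p_i}1/p_i}\asymp\sqrt{\log\log N}$ for the right choice of $m$ primes; (iv) choose $P$ large enough (depending only on the $b_j$, not on $N$) that the finitely many ``correction'' contributions from the $d_2\mid b_j$ in step (i) cannot flip the sign or kill the main term — this is where the flexibility of working mod a large prime $P$ instead of mod $4$ is essential, since it gives room to make the QR/non-QR bias robust against the bounded perturbations coming from $\mathbf b$; (v) set $\alpha_N^+$ using one choice of $\gamma$ and $\alpha_N^-$ using a ``reflected'' choice (the analogue of $3q/4$ versus $q/4$) so that $K$ and hence $\sum_j a_j C$ flips sign while retaining the same magnitude; then read off $\kappa_{\bf a,b}$ as the resulting positive constant and absorb all lower-order terms, including the $O(N\exp(-c\sqrt{\log N}))$ from Lemma~\ref{lem: montgomery's lemma for G in progressions}, into $O_{\bf a,b}(N)$.

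The main obstacle I anticipate is step (iv): showing that a \emph{single} prime $P$ and a \emph{single} family of choices can be made to work simultaneously for all $k$ of the scaled copies, i.e.\ that the dominant-sign contributions from $K(Q_{N,\bf b}b_j,\alpha b_j)$ for $j=1,\dots,k$ all have the same sign (the sign of $a_j$ aside — here we use $a_j>0$) and none is drowned out by the arithmetic of $b_j$. Montgomery's original argument only needed one value of $q$; here the interaction in Lemma~\ref{K scale lemma} between the non-residue product $q$ and the fixed integers $b_j$ introduces divisors $d_2\mid b_j$ for which $\alpha d_2/d$ need not land at the favorable point, so one must argue that for all but finitely many primes $P$ (equivalently, for a suitable $P$ chosen depending on the $b_j$), the residues of these finitely many exceptional $d_2$ modulo $P$ avoid a bad set; a counting/pigeonhole argument over choices of $P$, together with quantitative control (via standard character-sum or elementary estimates) on how many primes up to $x$ are non-residues mod $P$, should close this. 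A secondary, more routine obstacle is verifying uniformity: that the implied constant in Lemma~\ref{Montgomery constant lemma} (hence in Lemma~\ref{lem: montgomery's lemma for G in progressions}) is genuinely uniform for $q$ as large as $e^{c\sqrt{\log N}}$, which it is by hypothesis, so that choosing $Q_{N,\bf b}$ with $m\asymp\log\log N$ prime factors is legitimate.
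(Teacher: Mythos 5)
Your overall strategy matches the paper's: work modulo a carefully chosen prime $P$ depending on $\mathbf b$, take $Q_{N,\mathbf b}$ a product of small quadratic nonresidues modulo $P$, choose $\alpha$ so that $B(\alpha/d)$ correlates with the QR/QNR status of $d$ (hence with $\mu(d)$), apply Lemma~\ref{K scale lemma} to peel off the factors $b_j$, and flip the sign by replacing $\alpha$ by a reflected value. The two directions you flag as needing more work are exactly the two places where the sketch still has genuine gaps, and the paper's resolution of each is concrete enough that it is worth naming them.

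First, the resolution of your step (iv) is not a pigeonhole argument at all; it is an explicit arithmetic construction. The paper chooses $P_{\mathbf b}$ to be (by Dirichlet's theorem) the smallest prime with $P_{\mathbf b}\equiv -1\pmod{8b_1\cdots b_k}$. The congruence mod $8$ forces $P_{\mathbf b}\equiv 3\pmod 4$ (so $-1$ is a QNR, which is what makes the $\alpha^{-}$ choice flip the sign) and $P_{\mathbf b}\equiv\pm1\pmod 8$ (so $2$ is a QR). For each odd prime $p\mid b_j$, the congruence $P_{\mathbf b}\equiv-1\pmod p$ together with quadratic reciprocity gives $\leg{p}{P_{\mathbf b}}=(-1)^{(p-1)/2}\leg{-1}{p}=1$. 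Hence \emph{every} prime dividing any $b_j$ is a QR mod $P_{\mathbf b}$, so $\leg{d_2}{P_{\mathbf b}}=1$ for every $d_2\mid b_j$ appearing after Lemma~\ref{K scale lemma}, and the inner sum collapses to $\sum_{d_1d_2=b_j}\mu(d_1)/d_1=\phi(b_j)/b_j>0$ with no sign interference. A generic large prime $P$ would split the $d_2$ between QRs and QNRs, and the inner sum could vanish or change sign; the Dirichlet-plus-reciprocity construction is what removes this possibility deterministically, and it is the actual missing idea rather than a routine counting argument.

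Second, the parameter size is off in a way that matters. You oscillate between "primes up to roughly $\log N$" and "$m\asymp\log\log N$ prime factors," which are inconsistent and neither is right: the former gives $\log Q\asymp\log N$, violating $Q\le e^{\sqrt{\log N}}$, while the latter gives $Q$ so small that the bias $\prod_{p\mid Q}(1+1/p)$ is only $\asymp\sqrt{\log\log\log N}$, not $\sqrt{\log\log N}$. The paper takes $Q_{N,\mathbf b}=\prod_{p\le c\sqrt{\log N},\ \leg{p}{P_{\mathbf b}}=-1}p$, so that $\log Q_{N,\mathbf b}\sim\tfrac12 c\sqrt{\log N}$ (fitting the constraint) while $\prod_{p\mid Q}(1+1/p)\asymp\sqrt{\log(c\sqrt{\log N})}\asymp\sqrt{\log\log N}$. (Relatedly, your line "$|K|\gg\sqrt{\sum 1/p_i}$" is not the right form of the estimate; the correct one is $|K|\asymp\prod(1+1/p_i)\approx e^{\sum 1/p_i}$, which is what gives $\sqrt{\log y}$.) The paper then obtains the asymptotic with an explicit leading constant by writing $\sum_{d\mid Q,\ d\equiv b}1/d$ via orthogonality of characters mod $P_{\mathbf b}$ (Lemma~\ref{lem:in arithm prog}), evaluating the principal-character contribution using Mertens and the class-number formula (Lemmas~\ref{class number lemma} and~\ref{half product lemma}), and bounding the nonprincipal contributions.
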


\begin{proof}[Proof of Theorem~\ref{thm: error term independence} assuming Proposition~\ref{prop: Totient key proposition}]
By Lemma~\ref{R0 to EDelta},
\[
\sum_{j=1}^k r_j E_\Delta(s_j x) = x \sum_{j=1}^k r_j s_j R_0(s_j x) + O_{\bf r,s}\big(x \exp(-c\sqrt{\log x})\big).
\]
Let $D_{\bf s}$ be the least common denominator of the rational numbers $s_1,\dots,s_k$, and set $a_j=D_{\bf s}r_js_j$ and $b_j=D_{\bf s}s_j$. Replacing $x$ by $D_{\bf s}x$, we obtain
\begin{equation}  \label{converting E to G}
\sum_{j=1}^k r_j E_\Delta(s_j D_{\bf s}x) = x G_{\bf a,b}(x) + O_{\bf r,s}\big(x \exp(-c\sqrt{\log x})\big).
\end{equation}
Let $0<\varepsilon<\kappa_{\bf a,b}$. If there existed an $x_0$ such that $G_{\bf a,b}(x) < (\kappa_{\bf a,b}-\varepsilon)\sqrt{\log\log(D_{\bf s}x)}$ for all $x>x_0$, then we would have
\begin{align*}
\sum_{n=1}^N G_{\bf a,b}(nQ_N+\alpha_N^+) &< (\kappa_{\bf a,b}-\varepsilon) N \sqrt{\log\log(D_{\bf s}(NQ_N+\alpha_N^+))} + O(x_0\max_{1\le t\le x_0} G_{\bf a,b}(t)) \\
&= (\kappa_{\bf a,b}-\varepsilon) N \bigg( \sqrt{\log\log N} + O_{\bf s}\bigg( \frac1{\sqrt{\log N}} \bigg) \bigg) + O_{\varepsilon,\bf a,b}(1)
\end{align*}
by the bounds $0<\alpha_N^+<Q_{N,\bf b}$; for large $N$ (and hence for large $Q_{N,\bf b}$) this would contradict Proposition~\ref{prop: Totient key proposition}. Therefore no such $x_0$ can exist, in which case equation~\eqref{converting E to G} implies that there are arbitrarily large values of $x$ for which
\begin{equation*}
\sum_{j=1}^k r_j E_\Delta(s_j D_{\bf s}x) \ge x (\kappa_{\bf a,b}-\varepsilon)\sqrt{\log\log(D_{\bf s}x)} + O_{\bf r,s}\big(x \exp(-c\sqrt{\log x})\big).
\end{equation*}
In other words,
\[
\limsup_{x\to\infty} \frac{r_1E_\Delta(s_1 x) + \cdots + r_kE_\Delta(s_k x)}{x\sqrt{\log\log x}} \ge \frac{\kappa_{\bf a,b}}{D_{\bf s}},
\]
and the analogous argument using the values $G(nQ_N+\alpha_N^-)$ gives
\[
\liminf_{x\to\infty} \frac{r_1E_\Delta(s_1 x) + \cdots + r_kE_\Delta(s_k x)}{x\sqrt{\log\log x}} \le -\frac{\kappa_{\bf a,b}}{D_{\bf s}},
\]
completing the derivation of Theorem~\ref{thm: error term independence} from Proposition~\ref{prop: Totient key proposition}.
\end{proof}

Before addressing Proposition~\ref{prop: Totient key proposition} directly, we record some preliminary facts about the distribution of primes in residue classes and the associated $L$-values.

\begin{lemma}  \label{class number lemma}
Let $P\equiv3\mod4$ be a prime exceeding $3$, and let $\chi_1(\cdot) = \leg\cdot P$ denote the quadratic character modulo~$P$. Then the class number $h(-P)$ of the field $\mathbb Q(\sqrt{-P})$ equals
\[
h(-P) = \frac{\sqrt P}\pi L(1,\chi_1) = -\frac{1}{P}\sum_{a=1}^{P-1}a\chi_1(a).
\]
\end{lemma}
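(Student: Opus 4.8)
The statement to prove is the class number formula for $\mathbb{Q}(\sqrt{-P})$ with $P \equiv 3 \pmod 4$ prime, in the two equivalent forms $h(-P) = \frac{\sqrt{P}}{\pi} L(1,\chi_1)$ and $h(-P) = -\frac{1}{P}\sum_{a=1}^{P-1} a\chi_1(a)$. Both are entirely classical, so the plan is to cite the analytic class number formula and then perform the finite Fourier-analysis computation that converts $L(1,\chi_1)$ into the linear character sum. The first step is to invoke Dirichlet's class number formula for an imaginary quadratic field of discriminant $D < 0$: $h(D) = \frac{w\sqrt{|D|}}{2\pi} L(1,\chi_D)$, where $w$ is the number of roots of unity in the field. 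Since $P > 3$, the field $\mathbb{Q}(\sqrt{-P})$ has $w = 2$, and since $P \equiv 3 \pmod 4$ the discriminant is exactly $D = -P$ with $\chi_D = \chi_1 = \leg{\cdot}{P}$. This immediately yields $h(-P) = \frac{\sqrt{P}}{\pi} L(1,\chi_1)$, the first equality.

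For the second equality I would establish the identity $L(1,\chi_1) = -\frac{\pi}{P^{3/2}} \sum_{a=1}^{P-1} a\chi_1(a)$, which combined with the first equality gives the result. The standard route is via the Hurwitz zeta / Gauss sum expansion of $L(1,\chi)$ for an odd primitive character: write $L(1,\chi_1) = \sum_{n\ge1} \chi_1(n)/n$, group $n$ by residue class $a \bmod P$, and use $\sum_{n \equiv a} 1/n$ expansions, or more cleanly use the Gauss sum $\tau(\chi_1) = \sum_{a=1}^{P-1}\chi_1(a) e^{2\pi i a/P} = i\sqrt{P}$ (the sign being $+i\sqrt P$ precisely because $P \equiv 3 \pmod 4$, by the classical evaluation of quadratic Gauss sums). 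Then $L(1,\chi_1) = \frac{\tau(\chi_1)}{P}\sum_{a=1}^{P-1}\overline{\chi_1(a)}\bigl(-\log(1 - e^{2\pi i a/P})\bigr)$ via $\chi_1(n) = \frac{1}{\tau(\overline{\chi_1})}\sum_a \overline{\chi_1(a)} e^{2\pi i an/P}$ and $\sum_{n\ge1} z^n/n = -\log(1-z)$; taking imaginary parts (since $\chi_1$ is real and $L(1,\chi_1)$ is real) and using $\operatorname{Im}\log(1-e^{i\theta}) = (\theta-\pi)/2$ for $0 < \theta < 2\pi$ collapses the sum to a linear combination of the $a$'s, producing $L(1,\chi_1) = -\frac{\pi}{P^{3/2}}\sum_{a=1}^{P-1} a \chi_1(a)$ after simplification (the constant terms drop out because $\sum_a \chi_1(a) = 0$).

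Substituting this into $h(-P) = \frac{\sqrt P}{\pi} L(1,\chi_1)$ gives $h(-P) = -\frac{1}{P}\sum_{a=1}^{P-1} a\chi_1(a)$, completing the proof. The only genuinely delicate point is pinning down the correct sign and normalization: one must know that the quadratic Gauss sum equals $+i\sqrt P$ (not $-i\sqrt P$) when $P \equiv 3 \pmod 4$, and one must track the sign through the imaginary-part computation so that the final expression $-\frac1P\sum a\chi_1(a)$ comes out positive (which it must, being a class number) — equivalently, that $\sum_{a=1}^{P-1} a\chi_1(a) < 0$, i.e. quadratic residues are biased toward the lower half of $[1,P-1]$. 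These sign issues are the main obstacle, though they are all classical and can be resolved by citing a standard reference (e.g. Davenport's \emph{Multiplicative Number Theory} or Borevich–Shafarevich) rather than recomputed here; alternatively the whole lemma can simply be quoted from the literature on the class number of $\mathbb{Q}(\sqrt{-P})$.
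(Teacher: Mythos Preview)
Your proposal is correct, and in fact more detailed than the paper's own proof, which simply says ``These results are classical; see for example [Davenport, chapter~6, equations (15) and (19)].'' Your sketch of the derivation via the analytic class number formula and the Gauss-sum evaluation of $L(1,\chi_1)$ is the standard one underlying that citation, and your closing remark that the whole lemma can be quoted from Davenport is exactly what the paper does.
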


\begin{proof}
These results are classical; see for example~\cite[chapter 6, equations (15) and (19)]{Davenport}.
\end{proof}

\begin{lemma} \label{half product lemma}
Let $P\equiv3\mod4$ be a prime exceeding $3$. If $\chi_0$ denotes the principal character\mod P,  then
\[
\prod_{\substack{p\le y \\ \leg pP=-1}} \bigg( 1 + \frac{\chi_0(p)}p \bigg) = c_P \sqrt{\log y} + O_P(1),
\]
where
\begin{equation} \label{cP def}
c_P = \bigg( \frac{e^\gamma}\pi \frac{P-1}{h(-P)\sqrt P} \prod_{\leg pP=-1} \big(1-p^{-2}\big) \bigg)^{1/2}.
\end{equation}
On the other hand, for any nonprincipal character $\chi\mod P$,
\[
\prod_{\substack{p\le y \\ \leg pP=-1}} \bigg( 1 + \frac{\chi(p)}p \bigg) \ll_P 1.
\]
\end{lemma}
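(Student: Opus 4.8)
The plan is to analyze the product over primes $p \le y$ with $\leg pP = -1$ by taking logarithms and relating the resulting sum $\sum_{p\le y,\ \leg pP=-1} \frac{\chi(p)}{p}$ to logarithms of $L$-functions. First I would write, for any character $\chi \bmod P$, the identity $\mathbf 1[\leg pP=-1] = \tfrac12(1 - \chi_1(p))$ for $p \nmid P$, where $\chi_1 = \leg\cdot P$ is the quadratic character. Then
\[
\sum_{\substack{p\le y \\ \leg pP=-1}} \frac{\chi(p)}p = \frac12 \sum_{p\le y} \frac{\chi(p)}p - \frac12 \sum_{p\le y} \frac{(\chi\chi_1)(p)}p + O_P(1),
\]
the error absorbing the finitely many $p \mid P$. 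By Mertens' theorem for arithmetic progressions (or directly: $\sum_{p\le y} \frac{\psi(p)}p = \log\log y + M(\psi) + o(1)$ when $\psi$ is principal, and $= \log L(1,\psi) + o(1)$, i.e.\ $O(1)$, when $\psi$ is a nonprincipal character with $L(1,\psi)\ne0$), the right side is $\tfrac12\log\log y + O_P(1)$ precisely when exactly one of $\chi$, $\chi\chi_1$ is principal — that is, when $\chi \in \{\chi_0, \chi_1\}$ — and is $O_P(1)$ otherwise. Since $\chi_1$ is itself a nonprincipal character, this dichotomy already gives the qualitative statement: for $\chi = \chi_0$ the product grows like $(\log y)^{1/2}$, and for every nonprincipal $\chi$ the product is $O_P(1)$.

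Next I would pin down the constant $c_P$ in the principal-character case. Exponentiating, $\prod_{p\le y,\ \leg pP=-1}(1 + \tfrac{\chi_0(p)}p) = \exp\big(\sum \tfrac{\chi_0(p)}p + \sum O(p^{-2})\big)$, where the $O(p^{-2})$ correction converges and contributes a constant factor $\prod_{\leg pP=-1}\frac{1+p^{-1}}{\exp(p^{-1})}\cdot(\text{stuff})$; more cleanly, one compares $1 + p^{-1}$ with $(1-p^{-1})^{-1}$ and with $(1-p^{-2})^{-1}(1-p^{-1})^{-1}$ — wait, better: $1+p^{-1} = (1-p^{-2})/(1-p^{-1})$, so
\[
\prod_{\substack{p\le y\\ \leg pP=-1}}\Big(1+\frac1p\Big) = \prod_{\substack{p\le y\\ \leg pP=-1}}\big(1-p^{-2}\big) \cdot \prod_{\substack{p\le y\\ \leg pP=-1}}\Big(1-\frac1p\Big)^{-1}.
\]
The first factor converges to $\prod_{\leg pP=-1}(1-p^{-2})$. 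For the second, split off the Mertens product: $\prod_{p\le y}(1-p^{-1})^{-1} = e^\gamma \log y + O(1)$, and $\prod_{p\le y,\ \leg pP=1}(1-p^{-1})^{-1}$ together with $\prod_{p\le y,\ \leg pP=-1}(1-p^{-1})^{-1}$ must multiply (up to the factor from $p\mid P$, namely $(1-P^{-1})^{-1}$) to this. Using that $\sum_{p\le y,\ \leg pP=1}\frac1p - \sum_{p\le y,\ \leg pP=-1}\frac1p = -\sum_{p\le y}\frac{\chi_1(p)}p + O_P(1) = -\log L(1,\chi_1) + o(1)$, I get that the two partial products are each asymptotic to $\sqrt{e^\gamma \log y \cdot (1-P^{-1}) \cdot L(1,\chi_1)^{\mp1}}$ times a convergent constant, so
\[
\prod_{\substack{p\le y\\ \leg pP=-1}}\Big(1-\frac1p\Big)^{-1} \sim \Big(\frac{e^\gamma(1-P^{-1})}{L(1,\chi_1)}\Big)^{1/2} (\log y)^{1/2}.
\]
Finally I substitute $L(1,\chi_1) = \pi h(-P)/\sqrt P$ from Lemma~\ref{class number lemma} and collect everything: $c_P = \big(\prod_{\leg pP=-1}(1-p^{-2})\big)^{1/2}\cdot\big(e^\gamma(1-P^{-1})\sqrt P/(\pi h(-P))\big)^{1/2}$, which rearranges to exactly the displayed formula~\eqref{cP def}.

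The main obstacle is bookkeeping the constant correctly: I must be careful with (i) the contribution of the ramified prime $p = P$ (it appears in the Mertens product $\prod_{p\le y}(1-p^{-1})^{-1}$ but not in $\prod_{\leg pP=-1}$, hence the factor $(1 - P^{-1}) = (P-1)/P$, and since $P\cdot P^{-1}$-type normalization in Lemma~\ref{class number lemma} writes things in terms of $(P-1)$ this needs matching), (ii) the distinction between $\sum 1/p$ over a residue-class condition versus over a character, which costs factors of $\tfrac12$ that must reappear as square roots, and (iii) ensuring the convergent Euler-type products $\prod(1-p^{-2})$ are attached to the right half of the primes. The analytic input — Mertens' theorem, $L(1,\chi)\ne 0$ for nonprincipal $\chi\bmod P$, and partial summation to control error terms uniformly (all $O_P(1)$, since $P$ is fixed) — is entirely standard, so once the constant is tracked the proof is routine.
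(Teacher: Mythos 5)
Your approach is essentially the paper's, translated from multiplicative to additive language: the paper writes the squared product as an Euler-product identity
\[
\bigg( \prod_{\substack{p\le y \\ \leg pP=-1}} \bigg( 1 + \frac{\chi(p)}p \bigg) \bigg)^2 = \prod_{p\le y} \bigg( 1 - \frac{\chi(p)}p \bigg)^{-1} \prod_{p\le y} \bigg( 1 - \frac{\chi(p)\chi_1(p)}p \bigg) \prod_{\substack{p\le y \\ \leg pP=-1}} \bigg( 1 - \frac{\chi^2(p)}{p^2} \bigg),
\]
and you take logarithms and use $\mathbf 1[\leg pP = -1] = \tfrac12(1-\chi_1(p))$; both routes funnel into Mertens plus the nonvanishing and evaluation of $L(1,\chi_1)$, and you do arrive at the correct $c_P$. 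One advantage of the paper's identity-based route is that it sidesteps branch issues when $\chi$ is complex-valued, whereas taking $\log(1+\chi(p)/p)$ requires a word about why the errors from $\sum_p\sum_{k\ge2}\chi(p)^k/(kp^k)$ and from the choice of branch are uniformly $O_P(1)$ — not hard, but worth saying.

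There is one genuine slip to fix. You claim that $\tfrac12\sum_{p\le y}\chi(p)/p - \tfrac12\sum_{p\le y}(\chi\chi_1)(p)/p$ equals $\tfrac12\log\log y + O_P(1)$ ``precisely when exactly one of $\chi,\chi\chi_1$ is principal, i.e.\ when $\chi\in\{\chi_0,\chi_1\}$.'' That is wrong for $\chi=\chi_1$: then $\chi\chi_1=\chi_0$ is principal, so the expression is $-\tfrac12\log\log y + O_P(1)$, not $+\tfrac12\log\log y + O_P(1)$. This does not break the conclusion — the product then tends to $0$, which is still $\ll_P 1$ — and in fact your very next sentence states the correct dichotomy ($\chi_0$ grows, every nonprincipal $\chi$ is bounded). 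But as written, the intermediate statement contradicts the conclusion. The paper handles the $\chi=\chi_1$ case as an explicit separate case (``the second factor actually diverges to $0$, hence in particular is still bounded''); you should do the same. A second, cosmetic, slip: the identity $\sum_{\leg pP=1}1/p - \sum_{\leg pP=-1}1/p = \sum_{p\le y}\chi_1(p)/p$ has a $+$ sign, not a $-$; and $\sum_{p\le y}\chi_1(p)/p$ converges to a constant (a shifted version of $\log L(1,\chi_1)$), which is all you need. Neither slip propagates into your final $c_P$, which is correct.
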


\begin{proof}
Again let $\chi_1(\cdot) = \leg\cdot P$ denote the quadratic character\mod P. For any character $\chi\mod P$ we can write
\begin{equation} \label{three products}
\bigg( \prod_{\substack{p\le y \\ \leg pP=-1}} \bigg( 1 + \frac{\chi(p)}p \bigg) \bigg)^2 = \prod_{p\le y} \bigg( 1 - \frac{\chi(p)}p \bigg)^{-1} \prod_{p\le y} \bigg( 1 - \frac{\chi(p)\chi_1(p)}p \bigg) \prod_{\substack{p\le y \\ \leg pP=-1}} \bigg( 1 - \frac{\chi^2(p)}{p^2} \bigg).
\end{equation}
The last product is absolutely convergent uniformly in $P$; indeed,
\[
\bigg| \prod_{\substack{p>y \\ \leg pP=-1}} \bigg( 1 - \frac{\chi^2(p)}{p^2} \bigg) \bigg| \le \prod_{\substack{p>y \\ \leg pP=-1}} \bigg( 1 + \frac1{p^2} \bigg) < \prod_{n>y} \bigg( 1 + \frac1{n^2} \bigg) < \prod_{n>y} \bigg( 1-\frac1{n^2} \bigg)^{-1} = 1+\frac1{\lceil y\rceil},
\]
and so the last product equals $\prod_{\leg pP=-1} \big(1-\chi^2(p)p^{-2}\big) \big( 1 + O\big( \frac1y \big) \big)$ and in particular is uniformly bounded.
When $\chi$ is nonprincipal, we know~\cite[Theorem 4.11(d)]{MV} that
\[
\prod_{p\le y} \bigg( 1 - \frac{\chi(p)}p \bigg)^{-1} = L(1,\chi) + O_\chi\bigg( \frac1{\log y} \bigg) = L(1,\chi) \bigg( 1 + O_P\bigg( \frac1{\log y} \bigg) \bigg),
\]
since $L(1,\chi)\ne0$~\cite[Theorem 4.9]{MV}. (Better error terms are available but are not relevant for us.) In particular, when neither $\chi$ nor $\chi\chi_1$ is principal, the first two products on the right-hand side of equation~\eqref{three products} are $L(1,\chi) \big( 1 + O_P\big( \frac1{\log y} \big) \big)$ and $L(1,\chi\chi_1)^{-1} \big( 1 + O_P\big( \frac1{\log y} \big) \big)$, respectively; in particular, both are $\ll_P1$. This estimate establishes the lemma when $\chi$ is neither principal nor equal to $\chi_1$.

When $\chi=\chi_1$, the first and third factors are still bounded, while now the second factor actually diverges to $0$, hence in particular is still bounded. Finally, when $\chi=\chi_0$ is principal, the second factor converges to $1/L(1,\chi_1) = \sqrt P/\pi h(-P)$ by Lemma~\ref{class number lemma}, while the first factor on the left-hand side of equation~\eqref{three products} is
\[
\prod_{p\le y} \bigg( 1 - \frac{\chi_0(p)}p \bigg)^{-1} = \prod_{\substack{p\le y \\ p\ne P}} \bigg( 1 - \frac1p \bigg)^{-1} = \frac{P-1}P (e^\gamma \log y )\bigg(1+ O\bigg(\frac1{\log y}\bigg)\bigg)
\]
by Mertens's formula~\cite[Theorem 2.7(e)]{MV}. This asymptotic evaluation of the right-hand side of equation~\eqref{three products} establishes the lemma when $\chi$ is principal, indeed with the stronger error term $O_P\big( \frac1{\sqrt{\log y}} \big)$.
\end{proof}

We can now define the modulus $Q_{N,\bf b}$ appearing in the statement of Proposition~\ref{prop: Totient key proposition}, in terms of a companion prime $P_{\bf b}$.

\begin{definition} \label{P and Q definition}
Let $P_{\bf b}$ be the smallest prime satisfying $P_{\bf b}\equiv-1\mod{8b_1\dots b_k}$. Note that $P_{\bf b}\equiv7\mod8$, and so $-1$ is a quadratic nonresidue\mod{P_{\bf b}} while $2$ is a quadratic residue\mod{P_{\bf b}}. Furthermore, if $p$ is any odd prime dividing one of the $b_j$, then by quadratic reciprocity \cite[Theorem 3.1]{NZM} we have, since $P_{\bf b}\equiv3\mod4$ and $P_{\bf b}\equiv-1\mod p$,
\[
\leg p{P_{\bf b}} = (-1)^{(p-1)/2} \leg {P_{\bf b}}p = (-1)^{(p-1)/2} \leg{-1}p = 1.
\]
Consequently, each prime dividing every $b_j$ is a quadratic residue\mod{P_{\bf b}}.

Now define, for any integer $N\ge1$,
\[
Q_{N,\bf b} = \prod_{\substack{p\le c\sqrt{\log N} \\ \leg p{P_{\bf b}}=-1}} p,
\]
where $c$ is the constant from Lemma~\ref{lem: montgomery's lemma for G in progressions}.
The prime number theorem for arithmetic progressions to a fixed modulus (see~\cite[Corollary 11.21]{MV}) tells us that 
\[
\log Q_{N,\bf b} \sim \frac12 c\sqrt{\log N}
\]
In particular, $Q_{N,\bf b}$ tends to infinity with $N$, and $Q_{N,\bf b} < e^{c\sqrt{\log N}}$ when $N$ is large enough.

Note also that $Q_{N,\bf b}$ is squarefree and relatively prime to $P_{\bf b}$ and to each $b_j$, since every prime $p \mid b_j$ satisfies $\leg p{P_{\bf b}}=1$. Finally, note that any divisor $d$ of $Q_{N,\bf b}$ has the convenient (and, for us, crucial) property that $\leg d{P_{\bf b}} = \mu(d)$, since both quantities equal $(-1)^{\#\{p\mid d\}}$.
\end{definition}

\begin{lemma} \label{lem:in arithm prog}
For any $1\le b\le P-1$,
we have 
\[
\sum_{\substack{d\mid Q_{N,\bf b} \\ d\equiv b \mod {P_{\bf b}}}} \frac1{d} = \frac{c_{P_{\bf b}}}{\sqrt2(P_{\bf b}-1)} \sqrt{\log\log N} + O_{\bf b}(1),
\]
where $c_P$ was defined in equation~\eqref{cP def}.
\end{lemma}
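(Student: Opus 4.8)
The plan is to extract the sum $\sum_{d\mid Q_{N,\bf b},\, d\equiv b}1/d$ from the Euler product in Lemma~\ref{half product lemma} by orthogonality of Dirichlet characters modulo $P_{\bf b}$. First I would write
\[
\sum_{\substack{d\mid Q_{N,\bf b} \\ d\equiv b\mod{P_{\bf b}}}} \frac1d = \frac1{P_{\bf b}-1} \sum_{\chi\mod{P_{\bf b}}} \bar\chi(b) \sum_{d\mid Q_{N,\bf b}} \frac{\chi(d)}d,
\]
which is legitimate since $Q_{N,\bf b}$ is coprime to $P_{\bf b}$ so every divisor $d$ is a unit mod $P_{\bf b}$. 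Because $Q_{N,\bf b}$ is squarefree with prime factors exactly the primes $p\le c\sqrt{\log N}$ satisfying $\leg p{P_{\bf b}}=-1$, the inner sum factors as an Euler product:
\[
\sum_{d\mid Q_{N,\bf b}} \frac{\chi(d)}d = \prod_{\substack{p\le c\sqrt{\log N} \\ \leg p{P_{\bf b}}=-1}} \Big( 1 + \frac{\chi(p)}p \Big).
\]

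Next I would invoke Lemma~\ref{half product lemma} with $y = c\sqrt{\log N}$, so that $\log y = \frac12\log\log N + O(1)$ and hence $\sqrt{\log y} = \frac1{\sqrt2}\sqrt{\log\log N} + O(1/\sqrt{\log\log N})$. The lemma says the product over the principal character $\chi_0$ equals $c_{P_{\bf b}}\sqrt{\log y} + O_{P_{\bf b}}(1) = \frac{c_{P_{\bf b}}}{\sqrt2}\sqrt{\log\log N} + O_{\bf b}(1)$, while for every nonprincipal $\chi$ the product is $O_{P_{\bf b}}(1)$. Since $P_{\bf b}$ depends only on $\bf b$, all the $O_{P_{\bf b}}$ bounds are $O_{\bf b}$. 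Substituting into the orthogonality expansion, only the principal-character term contributes to the main term: the $\bar\chi_0(b) = 1$ (as $b$ is coprime to $P_{\bf b}$ when $1\le b\le P_{\bf b}-1$) coefficient gives $\frac1{P_{\bf b}-1}\cdot\frac{c_{P_{\bf b}}}{\sqrt2}\sqrt{\log\log N}$, and the remaining $P_{\bf b}-2$ characters contribute $\frac1{P_{\bf b}-1}\sum_{\chi\ne\chi_0}\bar\chi(b)\,O_{\bf b}(1) = O_{\bf b}(1)$. This yields exactly the claimed formula $\frac{c_{P_{\bf b}}}{\sqrt2(P_{\bf b}-1)}\sqrt{\log\log N} + O_{\bf b}(1)$.

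There is essentially no serious obstacle here; this is a bookkeeping lemma. The only points requiring a little care are: first, matching the truncation parameter — Lemma~\ref{half product lemma} is stated for a product over $p\le y$, so one must set $y = c\sqrt{\log N}$ and correctly propagate $\sqrt{\log y} = \frac1{\sqrt2}\sqrt{\log\log N}+O(1)$, noting the $\frac1{\sqrt2}$ factor that accounts for the discrepancy between $c_{P_{\bf b}}$ and the constant $\frac{c_{P_{\bf b}}}{\sqrt2(P_{\bf b}-1)}$ in the statement; second, confirming that the statement's hypothesis $1\le b\le P_{\bf b}-1$ guarantees $\gcd(b,P_{\bf b})=1$, so that $b$ genuinely lies in the group of units and $\bar\chi_0(b)=1$ (and the orthogonality relation picks out exactly the residue class $b$). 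I should also remark, for cleanliness, that the implied constant's dependence is only on $\bf b$ and not separately on $P_{\bf b}$, precisely because $P_{\bf b}$ is itself a function of $\bf b$ via Definition~\ref{P and Q definition}.
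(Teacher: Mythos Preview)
Your proposal is correct and follows essentially the same approach as the paper: orthogonality of characters modulo $P_{\bf b}$, factoring the resulting divisor sum over $Q_{N,\bf b}$ into an Euler product, and then invoking Lemma~\ref{half product lemma} with $y=c\sqrt{\log N}$ to isolate the principal-character contribution. The paper's proof is slightly terser but the logic is identical.
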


\begin{proof}
From the orthogonality of the characters modulo~${P_{\bf b}}$,
\begin{align*}
\sum_{\substack{d\mid Q_{N,\bf b} \\ d\equiv b \mod {P_{\bf b}}}} \frac1{d} &= \frac1{{P_{\bf b}}-1} \sum_{\chi \mod {P_{\bf b}}} \bar\chi(b) \sum_{d\mid Q_{N,\bf b}}\frac{\chi(d)}d \\
&= \frac1{{P_{\bf b}}-1} \sum_{\chi \mod {P_{\bf b}}} \bar\chi(b) \prod_{p\mid Q_{N,\bf b}} \bigg( 1 + \frac{\chi(p)}p \bigg) \\
&= \frac1{{P_{\bf b}}-1} \sum_{\chi \mod {P_{\bf b}}} \bar\chi(b) \prod_{\substack{p\le c\sqrt{\log N} \\ \leg p{P_{\bf b}}=-1}} \bigg( 1 + \frac{\chi(p)}p \bigg) \\
&= \frac1{{P_{\bf b}}-1} c_{P_{\bf b}} \sqrt{\log(c\sqrt{\log N})} +O_{P_{\bf b}}(1)
\end{align*}
by Lemma~\ref{half product lemma}. The statement of the lemma follows upon noting that $\log(c\sqrt{\log N}) = \frac12\log\log N+O(1)$.
\end{proof}

\begin{lemma}  \label{K evaluation lemma}
For any integer $m$ that is not a multiple of $P_{\bf b}$ and for any integer $N\ge3$,
\[
K\bigg( Q_{N,\bf b}, \frac{mQ_{N,\bf b}}{P_{\bf b}} \bigg) = \leg m{P_{\bf b}} \leg{Q_{N,\bf b}}{P_{\bf b}} \frac{c_{P_{\bf b}} h(-P_{\bf b})}{\sqrt2(P_{\bf b}-1)} \sqrt{\log\log N}  + O_{\bf b}(1).
\]
\end{lemma}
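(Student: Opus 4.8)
The plan is to compute $K\big(Q_{N,\bf b}, mQ_{N,\bf b}/P_{\bf b}\big)$ directly from its definition by exploiting the two structural features of the modulus $Q = Q_{N,\bf b}$ recorded in Definition~\ref{P and Q definition}: that $Q$ is squarefree with $\gcd(Q,P_{\bf b})=1$, and that every divisor $d\mid Q$ satisfies $\leg d{P_{\bf b}}=\mu(d)$. Writing $P=P_{\bf b}$ for brevity and unravelling the definition,
\[
K\Big(Q,\frac{mQ}P\Big) = -\sum_{d\mid Q}\frac{\mu(d)}d\, B\Big(\frac{mQ}{dP}\Big).
\]
Since $\gcd(Q/d,P)=1$, the argument $mQ/(dP)$ reduces mod~$1$ to $m(Q/d)\bar{(Q/d)}$-type data; more precisely, $\{mQ/(dP)\} = \{ (m\cdot (Q/d))/P\}$ depends only on the residue class of $m(Q/d) \bmod P$. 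So $B(mQ/(dP)) = B(r_d/P)$ where $r_d \equiv m(Q/d) \pmod P$ with $1\le r_d\le P-1$ (the argument is never an integer because $P\nmid mQ$). The first step is therefore to rewrite $K$ as a sum over divisors $d\mid Q$ of $-\tfrac{\mu(d)}d B(r_d/P)$, grouping divisors according to the value of $r_d\bmod P$.

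Next I would substitute $\mu(d)=\leg dP$ and use multiplicativity of the Legendre symbol to handle $B(r_d/P)$ via a character/linear-algebra step. The key is the classical linear expansion of the sawtooth over residues: for $1\le r\le P-1$ one has $B(r/P) = r/P - 1/2$, and $\sum_{r=1}^{P-1}\leg rP (r/P-1/2) = \frac1P\sum_{r=1}^{P-1} r\leg rP = -h(-P)$ by Lemma~\ref{class number lemma}. So if we detect the class $r_d\bmod P$ by the relation $\leg{r_d}P B(r_d/P)$ summed appropriately, the arithmetic of the problem will produce the factor $h(-P)$. Concretely: because $d\mapsto Q/d$ is an involution on divisors of $Q$ and $\mu(d)=\mu(Q/d)\mu(Q)=\leg QP\mu(Q/d)$ (using squarefreeness), I would reindex by $e=Q/d$, obtaining
\[
K\Big(Q,\frac{mQ}P\Big) = -\leg QP \sum_{e\mid Q}\frac{\mu(e)}{Q/e}\,B\Big(\frac{me}P\Big) = -\frac{\leg QP}{Q}\sum_{e\mid Q}\mu(e)\,e\,B\Big(\frac{me}P\Big).
\]
That still has the awkward weight $e$; the cleaner route is instead to keep the form $-\sum_{d\mid Q}\frac{\mu(d)}d B(mQ/(dP))$ and split $B(mQ/(dP)) = \{mQ/(dP)\} - 1/2$. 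The $-1/2$ piece contributes $\frac12\sum_{d\mid Q}\mu(d)/d = \frac12\prod_{p\mid Q}(1-1/p) \ll 1$ (actually $\to 0$), absorbed into $O_{\bf b}(1)$. For the fractional-part piece, use $\{mQ/(dP)\} = (1/P)\big(m(Q/d) \bmod P\big)$; detecting the residue via characters mod~$P$ and orthogonality turns the sum into $\frac1{P(P-1)}\sum_{\chi\bmod P}\big(\sum_{r}\bar\chi(r) r\big)\sum_{d\mid Q}\chi(mQ/d)$, and only $\chi=\chi_1$ (the quadratic character) survives with a large main term, since $\sum_{d\mid Q}\chi_1(mQ/d)=\chi_1(mQ)\prod_{p\mid Q}(1+1/p)$ is the large product controlled by Lemma~\ref{half product lemma}, while the other characters give $O_P(1)$ by the second half of that lemma, and $\chi=\chi_0$ gives a term with $\sum_r r = \binom P2$ that is handled separately and is $O(1)$ after combining with the $-1/2$ piece. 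Collecting: $\sum_r \overline{\chi_1}(r) r = \sum_r \chi_1(r) r = -P\,h(-P)$ by Lemma~\ref{class number lemma}, $\chi_1(mQ) = \leg mP\leg QP$, and $\prod_{p\mid Q}(1+1/p) = \prod_{p\le c\sqrt{\log N},\,\leg pP=-1}(1+1/p) = c_P\sqrt{\log(c\sqrt{\log N})} + O_P(1) = c_P\sqrt{\tfrac12\log\log N}+O_P(1)$ by Lemma~\ref{half product lemma}. Multiplying the constants $\frac1{P-1}\cdot\frac1P \cdot P\,h(-P)\cdot \frac1{\sqrt2}$ yields exactly $\frac{c_P h(-P)}{\sqrt2(P-1)}\sqrt{\log\log N}$ times $\leg mP\leg QP$, which is the claimed formula.

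The main obstacle is bookkeeping: correctly tracking that $\{mQ/(dP)\}$ equals $(1/P)$ times the least positive residue of $m(Q/d)$ (not of $m\bar d$) modulo~$P$ — this uses $\gcd(Q,P)=1$ but needs care since $d$ ranges over divisors of $Q$, and one must verify the argument is genuinely non-integral for every $d\mid Q$ (guaranteed by $P\nmid mQ$). A secondary subtlety is confirming that the principal-character contribution and the $-1/2$ tail combine to something $O_P(1)=O_{\bf b}(1)$ rather than contributing a spurious main term; this is where one uses $\sum_{d\mid Q}\mu(d)/d = \prod_{p\mid Q}(1-1/p) = o(1)$ together with the fact that the $\chi_0$-sum $\sum_{d\mid Q}\chi_0(mQ/d) = \prod_{p\mid Q}(1+1/p)$ reappears but multiplied by the bounded constant $\frac1{P}\binom P2 - \frac12 \cdot(\text{something}) $ — careful algebra shows the net is $O_P(1)$. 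Everything else is a direct substitution of Lemmas~\ref{class number lemma} and~\ref{half product lemma}. Note finally that $h(-P)$ could also be recognized through the identity $c_P^2 = \frac{e^\gamma}\pi\frac{P-1}{h(-P)\sqrt P}\prod_{\leg pP=-1}(1-p^{-2})$, but the cleanest derivation uses the second expression $h(-P) = -\frac1P\sum_a a\chi_1(a)$ from Lemma~\ref{class number lemma} directly, as above.
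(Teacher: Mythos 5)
Your approach is essentially the paper's: use $\mu(d)=\leg d{P_{\bf b}}$ on divisors of $Q_{N,\bf b}$, detect the residue class of $m(Q_{N,\bf b}/d)$ modulo $P_{\bf b}$ by character orthogonality (which the paper packages as Lemma~\ref{lem:in arithm prog}, after rewriting $\leg d{P_{\bf b}}=\leg{mQ_{N,\bf b}}{P_{\bf b}}\leg{mQ_{N,\bf b}/d}{P_{\bf b}}$ and keeping $B$ intact so that $\sum_a\leg a{P_{\bf b}}B(a/P_{\bf b})=-h(-P_{\bf b})$ drops out in one step), identify the quadratic character as the only one giving a main term via Lemma~\ref{half product lemma}, and extract $h(-P_{\bf b})$ from Lemma~\ref{class number lemma}. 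One correction: your displayed inner sum $\sum_{d\mid Q}\chi(mQ/d)$ has dropped the weight $\mu(d)/d$ --- without it the $\chi_1$-contribution would vanish identically ($\prod_{p\mid Q}(1+\chi_1(p))=0$), whereas with it restored one gets $\chi_1(mQ)\sum_{d\mid Q}\mu(d)\bar\chi_1(d)/d=\chi_1(mQ)\sum_{d\mid Q}1/d=\chi_1(mQ)\prod_{p\mid Q}(1+1/p)$, matching the evaluation you state.
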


\begin{remark}
The exact value of the leading constant is not as important for us as the fact that its dependence on $m$ is only in the term $\leg m{P_{\bf b}}$, so that the sign of the leading constant depends on whether $m$ is a quadratic residue or nonresidue modulo~$P_{\bf b}$.
\end{remark}

\begin{proof}
By Definition~\ref{Montgomery quantities def} and the coincidence between the Legendre symbol and the M\"obius function on divisors of $Q_{N,\bf b}$ (as noted at the end of Definition~\ref{P and Q definition}), we have
\begin{align*}
-K\bigg( Q_{N,\bf b}, \frac{mQ_{N,\bf b}}{P_{\bf b}} \bigg) &= \sum_{d\mid Q_{N,\bf b}} \frac{\mu(d)}d B\bigg( \frac{mQ_{N,\bf b}}{dP_{\bf b}} \bigg) \\
&= \sum_{d\mid Q_{N,\bf b}} \leg d{P_{\bf b}} \frac1d B\bigg( \frac{mQ_{N,\bf b}}{dP_{\bf b}} \bigg) \\
&= \sum_{d\mid Q_{N,\bf b}} \leg{mQ_{N,\bf b}}{P_{\bf b}} \leg{mQ_{N,\bf b}/d}{P_{\bf b}} \frac1d B\bigg( \frac{mQ_{N,\bf b}}{dP_{\bf b}} \bigg) \\
&= \leg{mQ_{N,\bf b}}{P_{\bf b}} \sum_{a=1}^{P_{\bf b}-1} \leg a{P_{\bf b}} B\bigg( \frac a{P_{\bf b}} \bigg) \sum_{\substack{d\mid Q_{N,\bf b} \\ \frac{mQ_{N,\bf b}}{d}\equiv a\mod{P_{\bf b}}}} \frac1d.
\end{align*}
The last congruence is equivalent to $d$ being in the reduced residue class 
$\frac{mQ_{N,\bf b}}{a}    \mod{P_{\bf b}}$, and so Lemma~\ref{lem:in arithm prog} applies:
\begin{align*}
-K\bigg( Q_{N,\bf b}, \frac{mQ_{N,{\bf b}}}{P_{\bf b}} \bigg) &= \leg{mQ_{N,\bf b}}{P_{\bf b}} \sum_{a=1}^{P_{\bf b}-1} \leg a{P_{\bf b}} B\bigg( \frac a{P_{\bf b}} \bigg) \bigg( \frac{c_{P_{\bf b}}}{\sqrt2(P_{\bf b}-1)} \sqrt{\log\log N} + O_{\bf b}(1) \bigg) \\
&= \leg{mQ_{N,\bf b}}{P_{\bf b}} \frac{c_{P_{\bf b}}}{\sqrt2(P_{\bf b}-1)} \sqrt{\log\log N} \sum_{a=1}^{P_{\bf b}-1} \leg a{P_{\bf b}} B\bigg( \frac a{P_{\bf b}} \bigg)  + O_{\bf b}(1).
\end{align*}
By the definition of the Bernoulli polynomial $B$, this sum equals
\[
\sum_{a=1}^{P_{\bf b}-1} \leg a{P_{\bf b}} B\bigg( \frac a{P_{\bf b}} \bigg) = \frac1{P_{\bf b}} \sum_{a=1}^{P_{\bf b}-1} a \leg a{P_{\bf b}} - \frac12 \sum_{a=1}^{P_{\bf b}-1} \leg a{P_{\bf b}} = -h(-P_{\bf b}) - 0
\]
by Lemma~\ref{class number lemma}, which completes the proof of the lemma.
\end{proof}

We now have all the tools we need to establish Proposition~\ref{prop: Totient key proposition} and hence our main theorems.

\begin{proof}[Proof of Proposition~\ref{prop: Totient key proposition}]
Given a sufficiently large integer $N$, define two numbers 
\[
\alpha_N^\pm = \frac{m^\pm Q_{N,\bf b}}{P_{\bf b}}, 
\]
where the integers $1\le m^\pm\le P_{\bf b}-1$ satisfy $m^\pm \equiv \pm Q_{N,\bf b} \mod{P_{\bf b}}$; note that none of the numbers $\alpha_N^\pm b_j$ is an integer, since neither $Q_{N,\bf b}$ nor any of the $b_j$ is a multiple of the prime~$P_{\bf b}$. Since we confirmed in Definition~\ref{P and Q definition} that $Q_{N,\bf b} < e^{c\sqrt{\log N}}$, we may invoke Lemma~\ref{lem: montgomery's lemma for G in progressions}:
\begin{align*}
\sum_{n=1}^{N}G_{\bf a,b}(nQ_{N,\bf b}+\alpha_N^+) &= N \sum_{j=1}^{k}a_jC(Q_{N,\bf b}b_j, \alpha_N^+ b_j)+O\big(N\exp(-c\sqrt{\log N})\big) \\
&= N \sum_{j=1}^{k}a_jK(Q_{N,\bf b}b_j,\alpha_N^+ b_j) \frac{6}{\pi^{2}} \prod_{p\mid Q_{N,\bf b}b_j} \bigg(1-\frac1{p^2} \bigg)^{-1} +O(N).
\end{align*}
As noted in Definition~\ref{P and Q definition}, each $b_j$ is relatively prime to $Q_{N,\bf b}$. Thus by Lemma~\ref{K scale lemma} with our choice of $\alpha_N^\pm$,
\begin{multline*}
\sum_{n=1}^{N}G_{\bf a,b}(nQ_{N,\bf b}+\alpha_N^\pm) = O(N) \\
+ \frac{6}{\pi^{2}} N \prod_{p\mid Q_{N,\bf b}} \bigg(1-\frac1{p^2} \bigg)^{-1} \sum_{j=1}^{k} a_j \prod_{p\mid b_j} \bigg(1-\frac1{p^2} \bigg)^{-1} \sum_{d_1d_2=b_j} \frac{\mu(d_1)}{d_1} K\bigg( Q_{N,\bf b}, \frac{m^\pm d_2Q_{N,\bf b}}{P_{\bf b}} \bigg).
\end{multline*}
By Lemma~\ref{K evaluation lemma},
\begin{multline*}
\sum_{n=1}^{N}G_{\bf a,b}(nQ_{N,\bf b}+\alpha_N^\pm) = O(N) + \frac{6}{\pi^{2}} N \prod_{p\mid Q_{N,\bf b}} \bigg(1-\frac1{p^2} \bigg)^{-1} \sum_{j=1}^{k} a_j \prod_{p\mid b_j} \bigg(1-\frac1{p^2} \bigg)^{-1} \\
\times \sum_{d_1d_2=b_j} \frac{\mu(d_1)}{d_1} \bigg( \leg{m^\pm d_2}{P_{\bf b}} \leg{Q_{N,\bf b}}{P_{\bf b}} \frac{c_{P_{\bf b}} h(-P_{\bf b})}{\sqrt2(P_{\bf b}-1)} \sqrt{\log\log N}  + O_{\bf b}(1) \bigg).
\end{multline*}
As noted in Definition~\ref{P and Q definition}, every prime dividing $b_j$ is a quadratic residue modulo $P_{\bf b}$, which implies that $\leg{d_2}{P_{\bf b}}=1$ always. Moreover, $m^+ \equiv Q_{N,\bf b} \mod{P_{\bf b}}$, so the product of Legendre symbols $\leg{m^+}{P_{\bf b}} \leg{Q_{N,\bf b}}{P_{\bf b}}$ equals $1$; on the other hand, since $-1$ is a quadratic nonresidue modulo $P_{\bf b}$, the product of Legendre symbols $\leg{m^-}{P_{\bf b}} \leg{Q_{N,\bf b}}{P_{\bf b}}$ equals $-1$. Consequently,
\begin{multline*}
\sum_{n=1}^{N}G_{\bf a,b}(nQ_{N,\bf b}+\alpha_N^\pm) = \pm \frac{6}{\pi^{2}} \frac{c_{P_{\bf b}} h(-P_{\bf b})}{\sqrt2(P_{\bf b}-1)} N \sqrt{\log\log N} \prod_{p\mid Q_{N,\bf b}} \bigg(1-\frac1{p^2} \bigg)^{-1} \sum_{j=1}^{k} a_j \\
\times \prod_{p\mid b_j} \bigg(1-\frac1{p^2} \bigg)^{-1} \sum_{d_1d_2=b_j} \frac{\mu(d_1)}{d_1} + O(N).
\end{multline*}
The last sum equals $\sum_{d\mid b_j} \frac{\mu(d)}d = \frac{\phi(b_j)}{b_j}$, which when multiplied by the preceding product becomes $\prod_{p\mid b_j} (1+\frac1p)^{-1}$. In addition, by the same argument as in the proof of Lemma~\ref{half product lemma},
\[
\prod_{p\mid Q_{N,\bf b}} \bigg(1-\frac1{p^2} \bigg)^{-1} = \prod_{\leg p{P_{\bf p}}=-1} \bigg(1-\frac1{p^2} \bigg)^{-1} + O\bigg( \frac1{\sqrt{\log N}} \bigg).
\]
We therefore see that we have established the proposition with
\begin{align}
\kappa_{\bf a,b} &= \frac{6}{\pi^{2}} \frac{c_{P_{\bf b}} h(-P_{\bf b})}{\sqrt2(P_{\bf b}-1)} \prod_{\leg p{P_{\bf p}}=-1} \bigg(1-\frac1{p^2} \bigg)^{-1} \sum_{j=1}^{k} a_j \prod_{p\mid b_j} \bigg(1+\frac1p\bigg)^{-1} \notag \\
&= \frac{e^{\gamma/2}3\sqrt2}{\pi^{5/2}} \frac{h(-P_{\bf b})^{1/2}}{P^{1/4}\sqrt{P_{\bf b}-1}} \prod_{\leg p{P_{\bf p}}=-1} \bigg(1-\frac1{p^2} \bigg)^{-1/2} \sum_{j=1}^{k} a_j \prod_{p\mid b_j} \bigg(1+\frac1p\bigg)^{-1}  \label{innermost sum}
\end{align}
by the definition~\eqref{cP def} of $c_P$.
\end{proof}

\begin{remark}
The condition that all of the $a_j$ are positive is used here only to ensure that this last line is non-zero. In the case where the $a_j$ may be arbitrary, and thus where the innermost sum in equation~\eqref{innermost sum} may equal $0$, it is possible that modifying $Q_{N,\boldsymbol{b}}$ and the above argument could prove the totient independence for any coefficients; however, we succeeded in getting this to work only when $k\leq 3$.

It is interesting to note that the methods of \cite{RB}, which handle primitive points in planar convex regions that have nowhere-vanishing Gaussian curvature, are quite different from the present methods and they involve the analysis of zeta functions. 
\end{remark}

\subsection*{Acknowledgement}
The first author was partially supported by ERC Advanced Research Grant no 267165 (DISCONV) and by Hungarian National Science Grant K 111827.  The fourth author was partially supported by ICERM, the Institute for Computational and Experimental Research in Mathematics, Brown University, and would like to thank the warm hospitality of the first author and the Alfr\'ed R\'enyi Institute of Mathematics, Hungarian Academy of Sciences.

\end{document}